\documentclass{article}

\usepackage{amsmath, amsfonts, amsthm, amssymb, mathrsfs}

\usepackage{mathtools}
\usepackage{microtype}

\usepackage{float}
\usepackage{enumerate}
\usepackage{graphicx}

\usepackage[hidelinks]{hyperref}

\usepackage{multicol}

 \usepackage[all]{xy}

\usepackage{paralist}
\usepackage{geometry}
\geometry{
    a4paper,
    %total={170mm,257mm},
    left=30mm,
    right=30mm,
    top=25mm,
    bottom=25mm,
}

\def\Z{\mathbb{Z}}
\def\R{\mathbb{R}}
\def\C{\mathbb{C}}

\def\N{\mathcal{N}}
\def\H{\mathcal{H}}
\def\P{\mathcal{P}}

\newtheorem{theorem}{Theorem}[section]
\newtheorem*{theorem*}{Theorem}

\newtheorem{lemma}[theorem]{Lemma}
\newtheorem{proposition}[theorem]{Proposition}
\newtheorem{corollary}[theorem]{Corollary}

\theoremstyle{definition}
\newtheorem{problem}{Problem}

\newtheorem{definition}[theorem]{Definition}
\newtheorem{remark}[theorem]{Remark}
\newtheorem{example}[theorem]{Example}

\def\int{\operatorname{Int}}
\def\es{\operatorname{S}}
\def\de{\operatorname{D}}
\def\reeb{\mathcal{R}}

\def\ab{\operatorname{Ab}}
\def\geng#1{\langle #1 \rangle}

\def\corank{\operatorname{corank}}

\def\rank{\operatorname{rank}}

\def\cat{\operatorname{cat}}

\newcommand{\comment}[1]{}

\usepackage{tikz}
\usetikzlibrary{matrix,positioning,calc}
\usetikzlibrary{decorations.pathmorphing}
\usetikzlibrary{decorations.pathreplacing}

\tikzset{snake it/.style={-stealth,
        decoration={snake,
            amplitude = .4mm,
            segment length = 2mm,
            post length=0.9mm},decorate}}

\makeatletter
\def\@addpunct#1{%
    \relax\ifhmode
    \ifnum\spacefactor>\@m \else#1\fi
    \fi}
    \newcommand{\zz}[1]{}
\newcommand{\keywordsname}{$2020$ Mathematics Subject Classification}
\def\@setkeywords{%
    {\itshape \keywordsname.}\enspace \@keywords\@addpunct.}
\def\keywords#1{\def\@keywords{#1}}
\let\@keywords=\@empty
\g@addto@macro{\maketitle}{\begingroup%
    \let\@makefnmark\relax  \let\@thefnmark\relax%
    \ifx\@keywords\@mpty\else\@footnotetext{\@setkeywords}\fi%
    \endgroup}
\makeatletter

\keywords{Primary: 57M15; Secondary: 57K31, 20F05.  \\
    \indent\indent{\itshape Key words and phrases}: Morse function, Reeb graph, corank, Heegaard genus, %\indent\indent %\hspace{3.1cm}
    group presentation.
%    \\\indent\indent The author was supported by the Polish Research Grant NCN Sheng 1
%    \\%\indent\indent 
%    UMO-2018/30/Q/ST1/00228.
    \\\indent\indent 
%    Research supported by the National Science Centre, Poland, within the grant Sheng 1 UMO-2018/30/Q/ST1/00228.
    Research supported by the grant Sheng 1 UMO-2018/30/Q/ST1/00228 of National Science Centre, Poland, \\
    \indent\indent and by the Slovenian Research and Innovation Agency program P1-0292 and grant J1-4001.
%    The authors were supported by the Polish Research Grant NCN 2015/19/B/ST1/01458
}

\newcommand{\address}{{ \bigskip

%\footnotesize
%    {\noindent\textsc{Wac\l{}aw Marzantowicz}\\
%        Adam Mickiewicz University in Pozna\'n\\
%        Faculty of Mathematics and Computer Science\\
%        ul. Uniwersytetu Pozna\'nskiego 4, 61-614 Pozna\'n, Poland} \\
%        \textit{E-mail address:} \texttt{marzan@amu.edu.pl}\\

\footnotesize
    {\noindent\textsc{\L{}ukasz Patryk Michalak}\\ \\
   		Institute of Mathematics, Physics and Mechanics,  \\
    	Jadranska 19, 1000 Ljubljana, Slovenia,  \\ 
    	and \\ 
    	Faculty of Mathematics and Computer Science, \\
    	Adam Mickiewicz University, Pozna\'n, \\
    	ul.~Uniwersytetu Pozna\'nskiego 4, 61-614 Pozna\'n, Poland} \\
    	\\
    \textit{E-mail address:} \texttt{lukasz.michalak@amu.edu.pl}

}}

\date{}

\title{Reeb graph invariants of Morse functions \\ and $3$-manifold groups}

\author{\L{}ukasz Patryk Michalak}

%\date{\today}

\numberwithin{equation}{section}

\begin{document}

    \maketitle
    \begin{abstract}   	
    	\noindent In this work we are focused on the existence of Morse functions on a closed manifold~$M$ which are far from being ordered, i.e. whose Reeb graphs have positive first Betti number, especially the maximal possible, equals $\operatorname{corank}(\pi_1(M))$. In~the case of $3$-manifolds we describe the minimal number of critical points needed to construct such functions, which is related with the number of vertices of degree $2$ in Reeb graphs. We define a~new invariant of $3$-manifold groups and their presentations, and using Heegaard splittings we show its utility in determining occurrence of disordered Morse functions. In particular, the \emph{Freiheitssatz}, a~result for one-relator groups, allows us to calculate this invariant in the case of orientable circle-bundles over a surface, which provides an interesting example of the behaviour of Morse functions. 
    \end{abstract}

%%%%%%%%%%%%%%%%%%%%%%%%%%%%%%%%%%%%%%%%%%%%%%%%%%
\section{Introduction}
%%%%%%%%%%%%%%%%%%%%%%%%%%%%%%%%%%%%%%%%%%%%%%%%%%
%%%%%%%%%%%%%%%%%%%%%%%%%%%%%%%%%%%%%%%%%%%%%%%%%%

Morse functions on a smooth closed manifold $M$ are a strong tool in the study of its topological properties. Each Morse function induces a handle decomposition of a manifold which also leads to its CW structure.
Moreover, every smooth closed manifold admits a Morse function and, without changing its critical points, it can be perturbed to be simple, i.e. on each critical level there is exactly one critical point. It gives a partition of a manifold on elementary cobordisms and together with rearrangement and cancellation theorems it constitutes a powerful technique in the proof of h-cobordism theorem of Smale (see \cite{Milnor}).
In the first step of the proof the order of consecutive critical points or handles is changed, obtaining an ordered Morse function in which the sequence of indices of critical points is non-decreasing as critical values increase.

In this work, we deal with the problem how disordered a Morse function on a given manifold can be. Its importance appears in the study of Reeb graphs. The Reeb graph $\reeb(f)$ of a Morse function $f\colon M \to \R$ is a graph obtained by contracting connected components of level sets of $f$ (G. Reeb \cite{Reeb}).  We~propose to measure disordering of a~Morse function by the homotopy type of its Reeb graph, which is determined by the first Betti number $\beta_1(\reeb(f))$ called the cycle rank of $\reeb(f)$. It is easy to observe that the Reeb graph of every ordered Morse function on a manifold of dimension $n \geq 3$ is a tree, so it has no cycles (\cite[Proposition 3.2]{Michalak-DCG}). On the other hand, we proved \cite[Theorem 5.2]{Michalak-DCG} that
% the Reeb number $\reeb(M)$ of~a~manifold~$M$, 
the maximum possible cycle rank among Reeb graphs of Morse functions on $M$ is equal to the corank of $\pi_1(M)$, the fundamental group of $M$. Up to our knowledge, there are no tools to produce a~function whose Reeb graph has a given positive cycle rank. 

Having such a construction would also be of interest because of the use of Reeb graphs in topological data analysis, especially their algorithmic version, a mapper \cite{SinghMemoliCarlsson}. For example, an analysis of the first homology group of Reeb graphs, mappers and nerve complexes was performed by T. Dey, F. Memoli and Y.~Wang in \cite{DeyMemoliWang}. It is known that more accurate information about $H_1(\reeb(f)$) is encoded in $\pi_1(M)$ rather than in $H_1(M)$ since $\corank(\pi_1(M))$ can be arbitrary smaller than $\beta_1(M) = \rank_\Z(H_1(M))$.

The corank of a finitely generated group $G$ is the maximum rank of a free group onto which there is an epimorphism from $G$. By $\Delta_2(\reeb(f))$ we denote the number of vertices of degree $2$ in $\reeb(f)$. One of the main results of that paper is the following theorem, which shows how the problem is strongly related to the topology of the manifold. A crucial part of its proof is the calculation of a new invariant $\Omega_k(G)$ of a finitely presented group $G$ (see~Definition~\ref{definition:simplicity}).

\begin{theorem}%[cf. Theorem \ref{theorem:circle_bundle_over_surface}]
	\label{theorem:main_theorem}
	Let $M_{\pm 1}$ be the $\es^1$-bundle over an orientable surface $\Sigma_g$ of genus $g\geq 1$ with Euler number $e= \pm1$. Then $\corank(\pi_1(M_{\pm 1}))=g$ and
	$$
	\Omega_{2g}(\pi_1(M_{\pm 1})) = 2g.
	$$
	Consequently, the Reeb graph of any simple Morse function $f\colon M_{\pm 1} \to \R$ with 
	the minimum number of critical points is a tree, 
	i.e. $\beta_1(\reeb(f)) = 0$, and $\Delta_2(\reeb(f)) = 4g$.
	
	However, 
	if we increase the number of critical points by $2$, then there exists
	a simple Morse function $f'\colon M_{\pm 1} \to \R$ 
	such that
	$\beta_1(\reeb(f')) = g = \corank(\pi_1(M_{\pm 1}))$ and $\Delta_2(\reeb(f')) = 2g+2$.
\end{theorem}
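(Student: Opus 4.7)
The plan is to establish the two group-theoretic equalities $\corank(\pi_1(M_{\pm 1})) = g$ and $\Omega_{2g}(\pi_1(M_{\pm 1})) = 2g$ first, and then derive both statements about Reeb graphs from the machinery developed earlier in the paper. For the corank, I would work with the standard presentation
\[
\pi_1(M_{\pm 1}) = \langle a_1, b_1, \ldots, a_g, b_g, t \mid \textstyle\prod_{i=1}^g [a_i, b_i] = t^{\pm 1},\ [a_i, t] = [b_i, t] = 1 \rangle,
\]
in which $t$ is the central fiber class. The bound $\corank \geq g$ is realized by killing the $b_i$ and $t$. Conversely, any epimorphism $\phi\colon\pi_1(M_{\pm 1}) \twoheadrightarrow F_n$ with $n \geq 2$ must send the central $t$ to an element of the trivial centre of $F_n$, hence $\phi(t) = 1$; the map then factors through $\pi_1(\Sigma_g)$, and the classical identity $\corank(\pi_1(\Sigma_g)) = g$ forces $n \leq g$.

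The substantial step is the computation of $\Omega_{2g}$. The upper bound is witnessed directly by the $2g$-generator presentation obtained from the one above after eliminating $t$ via the surface relation. The lower bound is the heart of the matter and is where the Freiheitssatz enters: I would argue by contradiction, showing that a presentation witnessing $\Omega_{2g} < 2g$ in the sense of the definition would, after using the centrality of $t$ to absorb the commutator relations, yield a one-relator quotient in which a proper sub-collection of the generators fails to embed as a free subgroup, contradicting Magnus's Freiheitssatz. This reduction to a genuine one-relator situation is the \emph{main obstacle} of the proof, since $\pi_1(M_{\pm 1})$ is not itself a one-relator group, and the elimination of the commutator relations has to be carried out compatibly with the extra structure imposed by the definition of $\Omega_{2g}$; in particular one must check that the hypothetical collapse extracted from such a presentation involves a proper sub-collection of the generators appearing in the single remaining relator, which is exactly the situation to which Magnus's theorem applies.

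With the two equalities in hand, the Morse-theoretic statements follow. A simple Morse function on $M_{\pm 1}$ with the minimum number of critical points has two extrema and $4g$ saddles, and the equality $\Omega_{2g} = 2g$ forces its Reeb graph to be a tree, so $\beta_1(\reeb(f)) = 0$; in a tree with two leaves and $4g$ further vertices of degrees in $\{2,3\}$ the degree-sum identity immediately forces all internal vertices to have degree $2$, giving $\Delta_2(\reeb(f)) = 4g$. For the construction of $f'$, I would add a cancelling $(1,2)$-pair to a minimal Morse function compatible with the surjection onto $F_g$, and then perform handle slides that globally re-route the ascending and descending manifolds so that each of the $g$ generators of $F_g$ contributes one independent cycle to the Reeb graph by flipping the role of a previously degree-$2$ saddle. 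The counts $\beta_1(\reeb(f')) = g$ and $\Delta_2(\reeb(f')) = 2g+2$ then follow from the Euler-characteristic identities for a connected graph with two leaves, $4g+2$ saddle vertices of degrees in $\{2,3\}$, and first Betti number $g$, which force exactly $2g$ degree-$3$ vertices and $2g+2$ degree-$2$ vertices.
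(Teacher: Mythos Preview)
Your corank computation and the deduction of the tree/degree-$2$ statements from $\Omega_{2g}=2g$ via Theorem~\ref{theorem:simplicity_cycles_and_degree_2_vertices} are in line with the paper. The substantive gap is in the lower bound $\Omega_{2g}\geq 2g$. The definition of $\Omega_{2g}$ is a minimum over \emph{all} deficiency-$0$ presentations on $2g$ generators, so you must explain why the first relator of an \emph{arbitrary} such presentation $\langle x_1,\dots,x_{2g}\mid r_1,\dots,r_{2g}\rangle$ necessarily involves every $x_i$. Your phrase ``using the centrality of $t$ to absorb the commutator relations'' only makes sense in the \emph{standard} presentation; in an arbitrary one you have no identified element playing the role of $t$, and passing to the quotient $\pi_1(\Sigma_g)$ does not by itself place $r_1$ in the normal closure of a \emph{single} word in the $x_i$ to which the Freiheitssatz could be applied. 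The paper supplies exactly this missing bridge: it invokes the (non-obvious) fact that any two $2g$-element generating sets of $\pi_1(M_{\pm1})$ are Nielsen equivalent \cite{Me_splittings}, which gives an automorphism $\varphi$ of $F_{2g}$ carrying the standard presentation kernel to the new one. Then $\ker\pi'\subset\langle\varphi(h)\rangle^{F_{2g}}$ with $h=\prod_i[a_i,b_i]$, and one checks $\varphi(h)$ involves every generator because $\pi_1(\Sigma_g)$ is freely indecomposable; only now does the Freiheitssatz force every $r_j$ to involve every $x_i$. Without the Nielsen-equivalence input (or the equivalent statement that every epimorphism $F_{2g}\twoheadrightarrow\pi_1(\Sigma_g)$ is standard up to automorphism of $F_{2g}$), your reduction to a one-relator situation does not go through.

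A second, lesser gap concerns the construction of $f'$. ``Add a cancelling $(1,2)$-pair and perform handle slides so that each generator of $F_g$ contributes a cycle'' is not a proof: you must exhibit $g$ index-$2$ handles whose attaching circles are disjoint from $g$ of the $1$-handles and each of which separates the current boundary surface. The paper does this concretely via an explicit genus-$(2g+1)$ Heegaard diagram of $M_e$ (Proposition~\ref{proposition:Heegaard_splitting_of_M_e}) in which the curves $\alpha_i=[a_i,h]$ visibly avoid the $B_j$-handles and separate; the desired $f'$ is read off from the resulting reordered handle decomposition. Your numerical Euler-characteristic check at the end is fine, but it presupposes the existence of such an $f'$, which is precisely what has to be constructed.
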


The invariant $\Omega_k(G)$ measures the minimum number of generators among all finite presentations of $G$ of rank $k$ and the same deficiency as $G$, which are needed to write first relator and such that each next relator needs a one more generator and all the previous ones. Theorem \ref{theorem:simplicity_cycles_and_degree_2_vertices} shows its connection with simple Morse functions on $3$-manifolds with $k_i$ critical points of index $i$ and $k_0=1$:
$$
\Omega_{k_1}(\pi_1(M)) \leq k_1 - \beta_1(\reeb(f)).
$$
The calculation of $\Omega_{2g}(\pi_1(M_{\pm 1}))$ is done using an algebraic fundamental fact of one-relator groups, the \emph{Freiheitssatz} (see W. Magnus \cite{Magnus1930,Magnus-book}).

Another motivation for describing functions $f$ with cycle rank of $\reeb(f)$ equal to $\corank(\pi_1(M))$
is the fact that it is not so easy to compute the corank of a group in general. The only known general method are Makanin--Razborov diagrams \cite{Makanin_eqs_in_free_groups,Razborov}. This issue is related to the A. Tarski's problem of the existence of solution to a system of equation in a free finitely generated group, which solution was provided in the Z. Sela's works (\cite{Sela} and later). However, Makanin--Razborov diagrams are difficult to use in practice. J. Stallings \cite{Stallings_corank1} proposed to find an algorithmic method of computing the corank of $\pi_1(M)$ in terms of non-separating hypersurfaces in~$M$. In fact, $\corank(\pi_1(M))$ is equal to the maximum number of components in a non-separating two-sided hypersurface in~$M$. However, we are not aware of any computational method like that.

Thus we focus on the relation of corank with Reeb graphs of Morse functions. If $\beta_1(\reeb(f)) = \corank(\pi_1(M))$, then the function $f$ is far from being ordered. To form a cycle in $\reeb(f)$ it is necessary to have a critical point of index $n-1$ below a critical point of index $1$. The more disordered Morse function is, the greater the chance of having cycles in its Reeb graph. For simplicity, we restrict our considerations to simple Morse functions, having critical points with distinct critical values.

In the case of surfaces it is quite easy to construct a Morse function with a given cycle rank of its Reeb graph (cf. K. Cole-McLaughlin, H. Edelsbrunner et al. \cite{Edelsbrunner}), however, it depends on the number of vertices of degree $2$ (\cite[Theorem~5.6]{Michalak-TMNA}).
For higher-dimensional manifolds the problem is obviously much more complicated and again vertices of degree $2$ in Reeb graphs play an important role. Thus we pay our attention to the invariant $\Delta_2(M)$, which is the minimum number of vertices of degree $2$ among all Reeb graphs of simple Morse functions on $M$. 
There is a straightforward application of $\Delta_2(M)$ in the realization problem for Reeb graphs resolved in \cite{Marzantowicz-Michalak,Michalak-TMNA,Michalak-DCG} and O.~Saeki's paper~\cite{Saeki_Reeb_spaces}. Recall that any graph $\Gamma$ with the so-called good orientation and $\beta_1(\Gamma) \leq \corank(\pi_1(M))$ can be realized as the Reeb graph of a Morse function on $M$ up to orientation-preserving homeomorphism of graphs. 
%In the case of surfaces there is a complete characterization of Reeb graphs of Morse functions up to isomorphisms (see \cite{Michalak-TMNA}).
A realization of $\Gamma$ up to isomorphism for a manifold of arbitrary dimension was provided by O. Saeki \cite{Saeki_Reeb_spaces}, but it uses smooth functions with infinitely many critical points, which even form submanifolds of codimension $0$. Such functions describe the topology of the manifold much more poorly. The vertices of degree $2$ in $\Gamma$ cause a restriction to its realization as the Reeb graph of a simple Morse function on $M$ --- it is impossible if $\Delta_2(\Gamma) < \Delta_2(M)$. 
As we will see $\Delta_2(M)$ can be described by topological invariants of $M$.

In this work it is shown that $\Delta_2(M)$ is non-zero in most cases. For example, by Proposition \ref{proposition:3-manifold_with_free_group} an orientable closed $3$-manifold $M$ has $\Delta_2(M)=0$ if and only if it is the connected sum of copies of $\es^2 \times \es^1$. Moreover, we prove that $\Delta_2(M)=2$ if and only if $M$ is the connected sum of copies of $\es^2 \times \es^1$ and a one copy of a lens space (Theorem \ref{theorem:Delta_2=2_iff_lens_space}).
Proposition \ref{proposition:inequalities_for_degree_2_vertices} provides three essentially different lower bounds on $\Delta_2(M)$ in terms of $\pi_1(M)$, homology groups of $M$ and Lusternik--Schnirelmann category of $M$. In the case of orientable $3$-manifolds, all of them can be improved by the inequality
\begin{equation}\label{formula:intro_inequality}
	\Delta_2(M) \geq 2(g(M) - \corank(\pi_1(M))),
\end{equation}
where $g(M)$ is the Heegaard genus of $M$. Thus we restrict our attention to orientable $3$-manifolds. 
It is reasonable to ask whether a Morse function with minimum number of critical points, so inducing a Heegaard splitting of minimum genus $g(M)$, can realize $\corank(\pi_1(M))$. Thus there are three natural questions.

\begin{problem}\label{problem:main}
	For an orientable $3$-manifold $M$ does there exists a simple Morse function $f\colon M \to \R$ with $k_i$ critical points of index $i$ such that %$k_0=k_2$ and
	\begin{itemize}
		\item[(a)]  $k_1 = g(M)$ and $\beta_1(\reeb(f)) = \corank(\pi_1(M))$?
		\item[(b)] $k_1 = g(M)$ and $\Delta_2(\reeb(f)) = \Delta_2(M)$?
		\item[(c)]  $\beta_1(\reeb(f)) = \corank(\pi_1(M))$ and $\Delta_2(\reeb(f)) = \Delta_2(M)$?
	\end{itemize}
\end{problem}

The positive answer to the question (a) is equivalent to the equality in the bound (\ref{formula:intro_inequality}) (Corollary~\ref{corollary:equality_Delta_2(M)=2(g-corank)}). We show that it is not true for a manifold $M$ with $g(M) = \corank(\pi_1(M))+1$ which does not have a lens space as a summand in its prime decomposition (Corollary \ref{corollary:at_least_4_degree_2_vertices_if_genus_and_corank_differs_by_1}). The Heisenberg manifold, which is just a circle bundle $M_1$ for $g=1$, 
is a nice example of such a manifold. More generally, $M_1$ for $g\geq 2$ is 
a counterexample to the question (b). We do not knot whether (c) always holds.

The paper is organized as follows. Section \ref{section:basic_notions} provides needed preliminaries.
Next, in Section~\ref{section:vertices_of_degree_two}, we~present properties and bounds on the number of vertices of degree $2$ in Reeb graphs of simple Morse functions on a closed manifold of arbitrary dimension.
Section \ref{section:3-manifolds} deals with functions on $3$-manifolds. In Subsection \ref{subsection:low_values_of_Delta_2} we classify orientable \mbox{$3$-manifolds}~$M$ with $\Delta_2(M)=0$ or $2$. Subsection \ref{subsection:Omega_invariant} provides a definition and properties of $\Omega$-invariant of a group. Next, in Subsection \ref{subsection:circle_bundles}, considerations for the circle bundles are presented. Section \ref{section:final_remarks} states some further directions of research.

%%%%%%%%%%%%%%%%%%%%%%%%%%%%%%%%%%%%%%%%%%%%%%%%%%

%%%%%%%%%%%%%%%%%%%%%%%%%%%%%%%%%%%%%%%%%%%%%%%%%%
\section{Preliminaries}\label{section:basic_notions}
%%%%%%%%%%%%%%%%%%%%%%%%%%%%%%%%%%%%%%%%%%%%%%%%%%

Throughout the paper $M$ is a closed smooth and connected manifold of dimension $n \geq 2$ and $\Sigma_g$ is a closed orientable surface of genus $g$.

Recall that a smooth function $f\colon M \to \R$ is a Morse function if all its critical points are non-degenerate, i.e. the Hessians at critical points are non-singular. Hereafter, we denote by $k_i$ the number of critical points of index $i$ of the considered function. A Morse function is \emph{simple} if each its critical level contains exactly one critical point. For the standard facts regarding Morse theory and its connections with handle decompositions we refer the reader to J. Milnor \cite{Milnor}.

The \emph{Reeb graph} $\reeb(f)$ of a Morse function $f$ is a finite graph obtained by contracting connected components of level sets of $f$ \cite{Reeb}. Its vertices correspond to connected components of level sets containing critical points. The Reeb graph can be defined also for more general classes of functions, e.g. for smooth functions with finitely many critical values (see O. Saeki \cite{Saeki_Reeb_spaces}).

The \emph{cycle rank} of a finite graph $\Gamma$ is defined to be its first Betti number $\beta_1(\Gamma)$. If $\Gamma$ is connected, then $\pi_1(\Gamma) \cong F_r$ is a free group $F_r$ of rank $r=\beta_1(\Gamma) = |E|-|V|+1$, where $V$ is the set of vertices and $E$ is the set of edges of $\Gamma$.

By \cite{KMS} of M. Kaluba et al. the quotient map $q_f \colon M \to \reeb(f)$ to the Reeb graph of $f$ induces an epimorphism $\pi_1(M) \to \pi_1(\reeb(f))$ onto a free group. We showed \cite{Michalak-DCG} (cf. O. Cornea \cite{Cornea}, I. Gelbukh \cite{Gelbukh:DCG} and W. Jaco \cite{Jaco})) the equivalence of the following three conditions:
\begin{itemize}
	\item There exists a Morse function (simple unless $M$ is an orientable surface) whose Reeb graph has cycle rank equal to $r$.
	\item There exists an epimorphism $\pi_1(M) \to F_r$.
	\item There exists a non-separating two-sided hypersurface (i.e. a codimension $1$ submanifold) in~$M$ with $r$ connected components.
\end{itemize}

The \emph{corank} of a finitely generated group $G$ is the maximum rank of a free group onto which there is an epimorphism from $G$. Thus the maximum cycle rank among all Reeb graphs of Morse functions on~$M$ is equal to $\corank(\pi_1(M))$. In fact, any integer between $0$ and $\corank(\pi_1(M))$ is a cycle rank of the Reeb graph of a simple Morse function on~$M$ different from an orientable surface.

Furthermore, there is a correspondence between epimorphisms $\pi_1(M) \to F_r$ and epimorphisms induced by the quotient maps $q_f$ on fundamental groups (see W. Marzantowicz, \L{}. Michalak \cite{Marzantowicz-Michalak} and O. Saeki \cite{Saeki_Reeb_spaces}).

By $\Delta(\Gamma)$ we denote the maximum degree of a vertex in a graph $\Gamma$, and by $\Delta_k(\Gamma)$ the number of its vertices of degree $k$.

For the definition of the \emph{canonical graph} and the Reeb graph in a \emph{canonical form} we refer to \cite[Definition~4.6]{Michalak-DCG}. Note that for a simple Morse function $f$ on a $3$-manifold the critical points of index $1$ and $2$ correspond bijectively with vertices of degree $2$ and $3$ in $\reeb(f)$ (\cite[Proposition 3.1]{Michalak-DCG}). Thus using the combinatorial modifications (1)--(3) from \cite[Lemma 4.1]{Michalak-DCG} we may assume that if $\reeb(f)$ is in a~canonical form, then there are no critical points of index $1$ (resp. index $2$) which are above (below) a critical point of index $2$ (resp. index $1$) corresponding to a vertex of degree $2$. In this case we say that $\reeb(f)$ is in \emph{the canonical form}. In brief, all vertices of degree $2$ corresponding to critical points of index $1$ (resp. index~$2$) are located just above (resp. below) the vertex of degree $1$ corresponding to the minimum (resp. maximum) of $f$.

%%%%%%%%%%%%%%%%%%%%%%%%%%%%%%%%%%%%%%%%%%%%%%%%%%

%%%%%%%%%%%%%%%%%%%%%%%%%%%%%%%%%%%%%%%%%%%%%%%%%%
\section{Bounds on the Number of Degree $2$ Vertices}\label{section:vertices_of_degree_two}
%%%%%%%%%%%%%%%%%%%%%%%%%%%%%%%%%%%%%%%%%%%%%%%%%%

The results of \cite{Michalak-DCG, Marzantowicz-Michalak} concerned with the realization of graphs as Reeb graphs of Morse functions, hold up to homeomorphism for manifolds of dimension at least $3$. O. Saeki \cite{Saeki_Reeb_spaces} provided the realization up to isomorphism of graphs using function with infinitely many critical points. However, the number of vertices of degree $2$ in the Reeb graph of Morse function cannot be arbitrary. 
In the case of surfaces it is seen for example in \cite[Theorem 5.6]{Michalak-TMNA}.
In this section, we indicate basic properties of the number of degree $2$ vertices in Reeb graphs of simple Morse functions. It will play an important role in finding a Morse function on a given manifold whose Reeb graph has the maximum possible cycle rank.

\begin{definition}
	We define $\Delta_2(M)$ to be the minimum number of vertices of degree~$2$ in Reeb graphs of simple Morse functions on a closed manifold $M$.
\end{definition}

Clearly, $\Delta_2(M)$ is a diffeomorphism invariant of $M$. It is easy to check that $\Delta_2(\Sigma) = (\chi(\Sigma) \mod 2)$ for any closed connected surface $\Sigma$ (orientable or not), where $\chi(\Sigma)$ is its Euler characteristic.
Note the following fact which follows directly from the definition of $\Delta_2(M)$.

\begin{corollary}
	Let $\Gamma$ be a finite graph 
	%with good orientation 
	and $\Delta(\Gamma) \leq 3$. If $\Delta_2(\Gamma) < \Delta_2(M)$, then there is no simple Morse function $f\colon M \to \R$ whose Reeb graph is isomorphic to $\Gamma$.
\end{corollary}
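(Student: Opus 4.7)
The plan is a one-step contrapositive straight from the definition of $\Delta_2(M)$. Suppose, aiming at a contradiction, that there were a simple Morse function $f\colon M \to \R$ with $\reeb(f) \cong \Gamma$. The number of degree-$2$ vertices is an isomorphism invariant of finite graphs, so $\Delta_2(\Gamma) = \Delta_2(\reeb(f))$. By the very definition of $\Delta_2(M)$ as the infimum of $\Delta_2(\reeb(g))$ over all simple Morse functions $g\colon M \to \R$, we have $\Delta_2(\reeb(f)) \geq \Delta_2(M)$. Chaining the equality and inequality yields $\Delta_2(\Gamma) \geq \Delta_2(M)$, contradicting the hypothesis $\Delta_2(\Gamma) < \Delta_2(M)$.

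The hypothesis $\Delta(\Gamma) \leq 3$ plays no role in this contradiction itself; it is included to isolate the meaningful regime. Indeed, for $n = \dim M \geq 3$, a simple Morse function always produces a Reeb graph of maximum vertex degree at most $3$ (critical points of index $0$ or $n$ give degree-$1$ vertices, those of index $1$ or $n-1$ give vertices of degree $2$ or $3$, and interior-index critical points contribute only degree-$2$ vertices). So a graph $\Gamma$ with $\Delta(\Gamma) > 3$ would be non-realizable for trivial reasons unrelated to $\Delta_2$, and the corollary would be vacuously true in that range. Imposing $\Delta(\Gamma) \leq 3$ makes the statement a genuine obstruction beyond the obvious degree constraints.

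Since the argument is a single application of the defining infimum, there is no real obstacle; the only subtlety worth flagging is that the comparison above uses the isomorphism invariance of $\Delta_2(\cdot)$, which is immediate from the definition.
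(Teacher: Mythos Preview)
Your argument is correct and is exactly what the paper intends: the corollary is stated there as following ``directly from the definition of $\Delta_2(M)$,'' with no further proof given, and your contrapositive unpacks precisely that one-line observation. Your additional remark about the role of the hypothesis $\Delta(\Gamma)\leq 3$ is accurate and a nice clarification, though the paper does not comment on it.
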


By \cite[Lemma 3.4]{Michalak-DCG} we have the following formula between the cycle rank of the Reeb graph of a simple Morse function $f\colon M \to \R$, numbers $k_i$ of critical points of index $i$ and the number of degree $3$ vertices:
\begin{align}\label{formula:cycle_rank_in_terms_of_Delta_3_Lemma_DCG}
	\beta_1(\reeb(f)) = -\frac{k_0+k_n}{2} + \frac{\Delta_3(\reeb(f))}{2} + 1.
\end{align}
In particular, if $f$ has only two extrema, then $k_0=k_n=1$ and so $\beta_1(\reeb(f)) = \frac{\Delta_3(\reeb(f))}{2}$.

\begin{proposition}\label{proposition:Euler_characteristic_is_Delta_2(M)_mod_2}
	Let $f\colon M \to \R$ be a simple Morse function on a closed manifold $M$. % of Euler characteristic $\chi(M)$. 
	Then
	$$
	\Delta_2(\reeb(f)) \equiv \chi(M) \mod 2.
	$$
\end{proposition}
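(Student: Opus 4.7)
The plan is to reduce the congruence to a vertex-counting identity modulo $2$. First, since $f$ is a simple Morse function, the vertices of $\reeb(f)$ are in bijection with the critical points of $f$, so the total number of vertices satisfies $|V(\reeb(f))| = \sum_{i=0}^{n} k_i$. On the other hand, the Morse equality gives $\chi(M) = \sum_{i=0}^{n} (-1)^i k_i$, and reducing modulo $2$ yields $|V(\reeb(f))| \equiv \chi(M) \pmod{2}$.

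The second ingredient is the local observation that every vertex of $\reeb(f)$ has degree at most $3$. This follows from inspecting the standard Morse normal form $-x_1^2 - \cdots - x_i^2 + x_{i+1}^2 + \cdots + x_n^2$ of a critical point of index $i$: small level sets are locally connected on each side of the critical value, except that an extremum (index $0$ or $n$) has empty level set on one side and produces a degree-$1$ vertex, while an index-$1$ or $(n-1)$ saddle has two local sheets on one side which may either remain in distinct global components (degree-$3$ vertex) or rejoin in the same component (degree-$2$ vertex). For $2 \le i \le n-2$ both local sides are connected, giving a degree-$2$ vertex. Hence only vertices of degree $1$, $2$, or $3$ occur in $\reeb(f)$.

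Finally, the handshake lemma applied to $\reeb(f)$ forces the number of odd-degree vertices to be even, so $\Delta_1(\reeb(f)) + \Delta_3(\reeb(f)) \equiv 0 \pmod{2}$. Combining with $|V(\reeb(f))| = \Delta_1(\reeb(f)) + \Delta_2(\reeb(f)) + \Delta_3(\reeb(f))$ gives
$$\Delta_2(\reeb(f)) \equiv |V(\reeb(f))| \equiv \chi(M) \pmod{2}.$$

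I do not anticipate any serious difficulty. The only mildly technical point is the degree-at-most-$3$ claim in arbitrary dimension, but this is precisely the kind of local analysis already recalled in Section~\ref{section:basic_notions} (in particular the correspondence used just before defining the canonical form), so I would cite it rather than redo the saddle computation.
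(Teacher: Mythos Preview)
Your proof is correct. The approach is close in spirit to the paper's but takes a somewhat more direct route. The paper combines the identity $\Delta_2(\reeb(f)) + \Delta_3(\reeb(f)) = k_1 + \cdots + k_{n-1}$ (from \cite[Proposition~3.1]{Michalak-DCG}) with the cycle-rank formula~(\ref{formula:cycle_rank_in_terms_of_Delta_3_Lemma_DCG}) to express $\Delta_2(\reeb(f))$ explicitly in terms of the $k_i$ and $\beta_1(\reeb(f))$, then reduces modulo~$2$. You instead bypass the cycle-rank formula entirely: once one knows that vertices have degree at most~$3$ and are in bijection with critical points, the handshake lemma finishes the job immediately. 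Since formula~(\ref{formula:cycle_rank_in_terms_of_Delta_3_Lemma_DCG}) is itself derived from the graph Euler characteristic (hence essentially from the handshake lemma), the two arguments unwind to the same content; yours simply avoids the intermediate packaging and is a bit more self-contained. One small remark: the claim $|V(\reeb(f))| = \sum_i k_i$ uses not only simplicity of $f$ but also that each critical level contributes exactly one singular component, which is clear but worth a word; and for the degree bound you should cite \cite[Proposition~3.1]{Michalak-DCG} (or \cite[Theorem~3]{Reeb}) rather than Section~\ref{section:basic_notions}, which only states it in passing for $3$-manifolds.
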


\begin{proof}
	By \cite[Proposition 3.1]{Michalak-DCG} (cf. \cite[Theorem 3]{Reeb}) 
	$$
	\Delta_2(\reeb(f)) + \Delta_3(\reeb(f)) = k_1 + \ldots + k_{n-1},
	$$
	so using the above formula we obtain
	\begin{align*}
		\Delta_2(\reeb(f)) = k_1 + \ldots + k_{n-1} - k_0 - k_n - 2\beta_1(\reeb(f))+2  
		\equiv  \sum_{i=0}^n (-1)^i k_i = \chi(M) \ \ \ ({\rm mod}\ 2).
	\end{align*}
\end{proof}

The \emph{rank} of a finitely generated group $G$ is the smallest cardinality of its generating set. It is clear that $\rank(G) \geq \corank(G)$. Note that $k_1 \geq \rank \pi_1(M)$, since a Morse function $f$ leads to a CW-decomposition with $k_1$ cells of dimension~$1$. The same argument for $-f$ gives us $k_{n-1} \geq \rank \pi_1(M)$.

By $\cat(X)$ we denote the \emph{Lusternik–Schnirelmann category} of a space~$X$, the minimum number of open sets covering $X$ which are contractible in $X$. Thus $\cat(X)=1$ if $X$ is contractible.

The following proposition gives bounds on $\Delta_2(M)$ in terms of $\pi_1(M)$, $\cat(M)$ and homology groups $H_*(M,R)$ with coefficients in a principal ideal domain $R$. Note that $\rank_R H_i(M,R)$ is the rank over $R$ of a finitely-generated $R$-module.

\begin{proposition}\label{proposition:inequalities_for_degree_2_vertices}
	Let $M$ be a closed manifold of dimension $n\geq 3$. Then $\Delta_2(M)$ satisfies the inequalities
	\begin{align}\label{formula:Delta_2>= 2(rank-corank)}
		\Delta_2(M) \geq 2(\rank(\pi_1(M)) - \corank(\pi_1(M))),
	\end{align}
	\begin{align}\label{formula:Delta_2>= sum_dim_H_i-2corank)}
		\Delta_2(M) \geq \sum_{i=1}^{n-1} \rank_R H_i(M,R)-2\corank(\pi_1(M)),
	\end{align}
	\begin{align}\label{formula:Delta_2>= cat(M)-2corank-2)}
		\Delta_2(M) \geq \cat(M)-2\corank(\pi_1(M)) -2.
	\end{align}
	Moreover, if $M$ is an orientable $3$-manifold with Heegaard genus $g(M)$, then
	\begin{align}\label{formula:Delta_2>= 2(g(M)-corank)}
		\Delta_2(M) \geq 2(g(M) - \corank(\pi_1(M))).
	\end{align}
	
\end{proposition}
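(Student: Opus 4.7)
The plan is to work with the identity
\begin{align*}
\Delta_2(\reeb(f)) \;=\; \sum_{i=1}^{n-1} k_i \;-\; 2\beta_1(\reeb(f)) \;-\; (k_0 + k_n) \;+\; 2,
\end{align*}
valid for every simple Morse function $f\colon M \to \R$; it is obtained by combining formula~(\ref{formula:cycle_rank_in_terms_of_Delta_3_Lemma_DCG}) with the identity $\Delta_2(\reeb(f)) + \Delta_3(\reeb(f)) = \sum_{i=1}^{n-1} k_i$ already used in the proof of Proposition~\ref{proposition:Euler_characteristic_is_Delta_2(M)_mod_2}. Since $\beta_1(\reeb(f)) \leq \corank(\pi_1(M))$, each of the four inequalities reduces to estimating $\sum_{i=1}^{n-1} k_i - (k_0+k_n) + 2$ from below by the corresponding invariant.

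For~(\ref{formula:Delta_2>= 2(rank-corank)}) I would invoke the CW-decomposition induced by $f$: contracting a spanning tree of the $1$-skeleton uses $k_0-1$ edges and leaves a wedge of $k_1 - k_0 + 1$ circles whose free fundamental group surjects onto $\pi_1(M)$, so $\rank(\pi_1(M)) \leq k_1 - k_0 + 1$; the same argument applied to $-f$ gives $\rank(\pi_1(M)) \leq k_{n-1} - k_n + 1$. Adding these and using that $1\neq n-1$ since $n\geq 3$, so $\sum_{i=1}^{n-1} k_i \geq k_1 + k_{n-1}$, yields the bound. For~(\ref{formula:Delta_2>= sum_dim_H_i-2corank)}) I would pass to the Morse chain complex over the field of fractions $K$ of $R$, where $b_i := \rank_R H_i(M,R) = \dim_K H_i(M,K)$; writing $r_i = \rank_K \partial_i$, rank--nullity gives $k_i = b_i + r_i + r_{i+1}$ with $r_1 = k_0 - b_0$ and $r_n = k_n - b_n$, whence
\begin{align*}
\sum_{i=1}^{n-1} k_i - (k_0+k_n) + 2 \;=\; \sum_{i=1}^{n-1} b_i + (2 - b_0 - b_n) + 2\sum_{i=2}^{n-1} r_i \;\geq\; \sum_{i=1}^{n-1} b_i,
\end{align*}
where the last step uses $b_0 = 1$ and $b_n \leq 1$.

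For~(\ref{formula:Delta_2>= cat(M)-2corank-2)}) and~(\ref{formula:Delta_2>= 2(g(M)-corank)}) the strategy is to reduce to the case $k_0 = k_n = 1$ via Morse-theoretic cancellation. Whenever $k_0 \geq 2$, connectedness of $M$ forces the existence of an index-$1$ critical point whose two descending gradient trajectories terminate at distinct minima; this corresponds to a degree-$3$ vertex of $\reeb(f)$ with two leaves among its neighbours, and it can be canceled against one of those minima. The cancellation removes exactly one degree-$1$ and one degree-$3$ vertex of $\reeb(f)$ and concatenates the two remaining edges at the saddle into a single edge, so $\Delta_2(\reeb(f))$ and $\beta_1(\reeb(f))$ are preserved while $k_0$ and $k_1$ each drop by $1$; the symmetric argument handles maxima and index-$(n-1)$ saddles. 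Iterating produces a simple Morse function $f'$ with $k_0' = k_n' = 1$ and $\Delta_2(\reeb(f')) = \Delta_2(\reeb(f))$. Now (\ref{formula:Delta_2>= cat(M)-2corank-2)}) follows from the Lusternik--Schnirelmann bound $\sum_{i=0}^n k_i' \geq \cat(M)$, i.e.\ $\sum_{i=1}^{n-1} k_i' \geq \cat(M) - 2$; and when $\dim M = 3$, $f'$ determines a Heegaard splitting of $M$ of genus $k_1' = k_2'$ (equality forced by $\chi(M) = 0$), so $k_1' = k_2' \geq g(M)$ and $\sum_{i=1}^{2} k_i' \geq 2g(M)$.

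The step I expect to be the main obstacle is the cancellation argument of the last paragraph: one needs both the existence of a canceling pair of the described type whenever $k_0\geq 2$, and the verification that the underlying handle cancellation really realizes the combinatorial Reeb-graph move that leaves $\Delta_2$ and $\beta_1$ fixed. Once this is granted, each of the four bounds is a routine substitution into the key identity.
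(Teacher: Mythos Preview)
Your argument is correct and follows essentially the same route as the paper. The only organisational difference is that the paper invokes \cite[Lemma~4.5]{Michalak-DCG} at the very start to replace $f$ by a simple Morse function with exactly two extrema and the same $\Delta_2(\reeb(f))$; this is precisely the cancellation step you flag as the main obstacle, and once it is in place all four inequalities follow immediately from $\Delta_2(\reeb(f)) = \sum_{i=1}^{n-1} k_i - \Delta_3(\reeb(f))$ together with $k_i \geq \rank_R H_i(M,R)$, $\sum k_i \geq \cat(M)$, $k_1,k_{n-1}\geq \rank(\pi_1(M))$, and $k_1=k_2\geq g(M)$---so your more elaborate direct arguments for (\ref{formula:Delta_2>= 2(rank-corank)}) and (\ref{formula:Delta_2>= sum_dim_H_i-2corank)}) become unnecessary.
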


Basic information about Heegaard splittings and other facts of $3$-manifold topology can be found in J.~Hempel's book \cite{Hempel}.

\begin{proof}
	Let $f\colon M \to\R$ be a simple Morse function. By \cite[Lemma 4.5]{Michalak-DCG} we may assume that $f$ has only two extrema without changing $\Delta_2(\reeb(f))$. Therefore $k_0=k_n=1$ and 
	$\corank(\pi_1(M)) \geq \beta_1(\reeb(f)) = \frac{\Delta_3(\reeb(f))}{2}$, so
	\begin{align*}
		\Delta_2(\reeb(f))  = k_1 + \ldots + k_{n-1} - \Delta_3(\reeb(f))  &\geq 2\left( \frac{k_1+k_{n-1}}{2} - \frac{\Delta_3(\reeb(f)) }{2}\right) 	\\
		&\geq 2\left(\rank(\pi_1(M)) - \corank(\pi_1(M))\right).	
	\end{align*}
	From Morse inequalities $k_i \geq \rank_R H_i(M,R)$ we obtain (\ref{formula:Delta_2>= sum_dim_H_i-2corank)}). It is also known (see \cite{Takens}) that $\cat(M)$ bounds from below the number of critical points of a smooth function on $M$. Thus $\sum_{i=0}^n k_i \geq \cat(M)$, what gives (\ref{formula:Delta_2>= cat(M)-2corank-2)}).
	
	Finally, if $M$ is an orientable $3$-manifold, then $k_1 = k_2 \geq g(M)$ since $f$ has exactly two extrema and $\chi(M)=0$. Therefore
	$$
	\Delta_2(\reeb(f))  = k_1 +  k_{2} - \Delta_3(\reeb(f))  = 2\left( \frac{k_1+k_2}{2} - \frac{\Delta_3(\reeb(f)) }{2}\right) = 2\left( k_1 - \beta_1(\reeb(f))\right),
	$$
	what implies the desired inequality.
\end{proof}

\begin{example}
	Since $\cat(M) \leq \dim M +1$ and $\cat(M)$ is not an easy invariant to compute, the inequality (\ref{formula:Delta_2>= cat(M)-2corank-2)}) seems to be of less utility. However, for $M=\R\!\operatorname{P}^n$ the bound (\ref{formula:Delta_2>= cat(M)-2corank-2)}) provides $\Delta_2(\R\!\operatorname{P}^n) \geq n-1$ since $\cat(\R\!\operatorname{P}^n)=n+1$, while (\ref{formula:Delta_2>= 2(rank-corank)}) gives only $\Delta_2(\R\!\operatorname{P}^n)\geq 2$.
	
	More generally, if $M$ is simply connected, than (\ref{formula:Delta_2>= 2(rank-corank)}) is trivial, but (\ref{formula:Delta_2>= cat(M)-2corank-2)})  yields $\Delta_2(M)\geq \cat(M)-2$ and the right-hand side is positive if $M$ is not a sphere. For example, $\cat(\C\!\operatorname{P}^n)=n+1$, so $\Delta_2(\C\!\operatorname{P}^n)\geq n-1$.
	
	In both the examples bounds given by (\ref{formula:Delta_2>= sum_dim_H_i-2corank)}) and (\ref{formula:Delta_2>= cat(M)-2corank-2)}) are the same. It may also happen that (\ref{formula:Delta_2>= cat(M)-2corank-2)}) is better than (\ref{formula:Delta_2>= sum_dim_H_i-2corank)}). For example, if $M$ is a homology sphere other than $\es^n$, then (\ref{formula:Delta_2>= sum_dim_H_i-2corank)}) yields $\Delta_2(M) \geq 0$, but (\ref{formula:Delta_2>= cat(M)-2corank-2)}) gives $\Delta_2(M) \geq \cat(M)-2 \geq 2$, since $\cat(M) \geq 4$ if $\pi_1(M)$ is not free by \cite{LS-cat}.
\end{example}

\begin{example}
	In some cases the bound (\ref{formula:Delta_2>= 2(rank-corank)}) can be better than (\ref{formula:Delta_2>= sum_dim_H_i-2corank)}) and (\ref{formula:Delta_2>= cat(M)-2corank-2)}).
	
	Take $M= L_p \# L_q$, the connected sum of two lens spaces such that $\gcd(p,q)=1$, where $\pi_1(L_k)=\Z/k\Z$. Then $\pi_1(M) = (\Z/p\Z )* (\Z/q\Z)$, $\rank(\pi_1(M))=2$, $\corank(\pi_1(M))=0$, so (\ref{formula:Delta_2>= 2(rank-corank)}) gives $\Delta_2(M) \geq 4$. Since $\cat(M)\leq 4$, the bound (\ref{formula:Delta_2>= cat(M)-2corank-2)}) yields at most $\Delta_2(M)\geq 2$. Moreover, $H_0(M)=H_3(M)=\Z$, $H_1(M)=\Z/pq\Z$ and $H_2(M)=0$, so $\rank_R H_1(M,R) \leq 1$ and $\rank_R H_2(M,R) \leq 1$ for any principal ideal domain $R$ by the universal coefficient theorem. Thus from (\ref{formula:Delta_2>= sum_dim_H_i-2corank)}) we also obtain at most $\Delta_2(M) \geq 2$
	
\end{example}

\begin{example}\label{example:n-torus_and_Delta_2}
	Similarly, there are examples where the bound  (\ref{formula:Delta_2>= sum_dim_H_i-2corank)}) is better than (\ref{formula:Delta_2>= 2(rank-corank)}) and (\ref{formula:Delta_2>= cat(M)-2corank-2)}).
	
	For $n$-dimensional torus $T^n$ one can show that $\cat(T^n)=n+1$, so the inequality (\ref{formula:Delta_2>= cat(M)-2corank-2)}) implies $\Delta_2(T^n) \geq n-3$, while (\ref{formula:Delta_2>= 2(rank-corank)}) gives $\Delta_2(T^n) \geq 2(n-1)$. However, since $\rank_\Z H_k(T^n) = \binom{n}{k}$, the formula (\ref{formula:Delta_2>= sum_dim_H_i-2corank)}) provides $\Delta_2(M) \geq 2^n - 4$.
\end{example}

\begin{example}
	In the case of orientable $3$-manifolds the bound (\ref{formula:Delta_2>= 2(g(M)-corank)}) using Heegaard genus $g(M)$ is sharper than the other three, because $g(M)\geq \rank(\pi_1(M))$, $g(M) \geq \rank_R H_i(M,R)$ for $i=1,2$ and $2g(M)\geq \cat(M)-2$ (the last inequality follows since $\cat(M) -2\leq 2$, so it suffices to $g(M)\geq 1$, and for $g(M)=0$, $M$ is the $3$-sphere, so $\cat(M)=2$).
	
	By \cite[Proposition 3.2]{Michalak-DCG} the Reeb graph of an ordered simple Morse function with $k_1 = g(M)$ is a tree, so it has $2g(M)$ vertices of degree $2$. Therefore 
	\begin{align}\label{formula:upper_and_lower_bounds_for_delta_2_using_Heegaard_genus}
		2g(M) \geq \Delta_2(M) \geq 2(g(M) - \corank(\pi_1(M))).
	\end{align}
	In particular, if $\corank(\pi_1(M))=0$, e.g. if $M$ is a homology sphere, then $\Delta_2(M) = 2g(M)$.
\end{example}

\begin{lemma}\label{lemma:gluing_functions_for_connected_sum}
	\label{lemma:additivity_for_connected_sum_when_equality_in_lower_bounds}
	For closed manifolds $M_1$ and $M_2$ of the same dimension we have
	$$
	\Delta_2(M_1 \# M_2) \leq \Delta_2(M_1) + \Delta_2(M_2).
	$$	
	Moreover, if both $M_1$ and $M_2$ have equality in one of (\ref{formula:Delta_2>= 2(rank-corank)}), (\ref{formula:Delta_2>= 2(g(M)-corank)}) or in (\ref{formula:Delta_2>= sum_dim_H_i-2corank)}) for $R=\Z$ if one of $M_i$ is orientable, then 
	$
	\Delta_2(M_1 \# M_2) = \Delta_2(M_1) + \Delta_2(M_2).
	$
\end{lemma}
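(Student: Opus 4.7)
The plan is to prove the inequality by a direct gluing construction and then to derive the equality cases from the fact that all three of the lower bounds in Proposition \ref{proposition:inequalities_for_degree_2_vertices} are additive under connected sum. Choose simple Morse functions $f_i \colon M_i \to \R$ with $\Delta_2(\reeb(f_i)) = \Delta_2(M_i)$ and, using the canonical-form modifications of \cite[Section 4]{Michalak-DCG}, assume that $f_1$ has a unique maximum $p_1$ at the top critical level and $f_2$ has a unique minimum $p_2$ at the bottom critical level. Removing small Morse-coordinate balls around $p_1$ and $p_2$, rescaling so that the boundary spheres coincide, and filling in a strictly monotone cylindrical function on the neck yields a simple Morse function $f \colon M_1 \# M_2 \to \R$ whose critical set is the disjoint union of the critical sets of $f_1$ and $f_2$ with $p_1$ and $p_2$ deleted. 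On the Reeb graph side this operation erases the two degree-$1$ vertices $p_1, p_2$ and merges their incident edges into a single edge, so no new vertex of degree $2$ appears and $\Delta_2(\reeb(f)) = \Delta_2(\reeb(f_1)) + \Delta_2(\reeb(f_2))$, giving the desired upper bound.

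For the equality cases I would use the additivity of the three relevant invariants under connected sum. Grushko's theorem gives $\rank(\pi_1(M_1 \# M_2)) = \rank(\pi_1(M_1)) + \rank(\pi_1(M_2))$, and combining Grushko with the Kurosh subgroup theorem (applied to the kernel of an epimorphism of a free product onto a free group) also yields $\corank(\pi_1(M_1) * \pi_1(M_2)) = \corank(\pi_1(M_1)) + \corank(\pi_1(M_2))$; this handles bound (\ref{formula:Delta_2>= 2(rank-corank)}). In the orientable $3$-dimensional case Haken's additivity of Heegaard genus, $g(M_1 \# M_2) = g(M_1) + g(M_2)$, together with the additivity of corank, handles (\ref{formula:Delta_2>= 2(g(M)-corank)}). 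For bound (\ref{formula:Delta_2>= sum_dim_H_i-2corank)}) with integer coefficients, a Mayer--Vietoris computation on the decomposition $M_1 \# M_2 = M_1' \cup_{S^{n-1}} M_2'$ with $M_j' = M_j \setminus \operatorname{int}(D^n)$ shows that $\rank_\Z H_i(M_1 \# M_2) = \rank_\Z H_i(M_1) + \rank_\Z H_i(M_2)$ for $1 \leq i \leq n-1$, where orientability of at least one summand is used to control the connecting map in degree $n-1$. In each case, substituting the equality hypothesis for $M_1$ and $M_2$ into the corresponding additivity relation produces $\Delta_2(M_1) + \Delta_2(M_2) \leq \Delta_2(M_1 \# M_2)$, which combined with the upper bound already established gives equality.

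The main obstacle I expect is verifying the Reeb-graph side of the gluing cleanly, namely that the two edges incident to $p_1$ and $p_2$ genuinely merge into a single edge without producing an extra vertex of degree $2$ and that no spurious critical points appear in the neck. This should follow from the canonical form assumption together with the cylindrical choice of the neck function, but deserves careful bookkeeping. The algebraic additivity of corank for free products, although essentially well-known, is the other step where precise referencing to Grushko--Kurosh theory is called for.
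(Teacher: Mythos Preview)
Your proposal is correct and follows essentially the same approach as the paper's proof: the paper also glues $f_1$ and $f_2$ by removing an $n$-handle at a global maximum of $f_1$ and a $0$-handle at a global minimum of $f_2$, obtaining $\Delta_2(\reeb(f)) = \Delta_2(M_1)+\Delta_2(M_2)$, and then invokes additivity of $\rank$, $\corank$, $g(M)$ and $H_i$ under connected sum (citing Grushko, \cite{Cornea}, Haken, and noting the orientability hypothesis for $H_{n-1}$). The only cosmetic differences are that the paper does not first pass to a unique extremum via canonical form (it simply takes the global maximum and minimum, which are unique critical points at their levels by simplicity), and it cites \cite{Cornea} directly for corank additivity rather than deriving it from Grushko--Kurosh; your worry about the neck producing a spurious degree-$2$ vertex is unfounded, as the paper also treats this step as immediate.
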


\begin{proof}
	For $i=1,2$ let $f_i\colon M_i \to \R$ be a simple Morse function on $M_i$ such that $\Delta_2(\reeb(f_i)) = \Delta_2(M_i)$. After possible translation we may assume that the maximum value of $f_1$ is the minimum value of $f_2$, which is denoted by $c$. If $f_i(p_i)=c$, take an $n$-handle $D^n_i$ corresponding to $p_1$ and a $0$-handle corresponding to $p_2$, so the boundary $\partial D^n_i$ is the component of level set of $f_i$. Perform the connected sum $M_1 \# M_2$ removing $\int D^n_i$. Then the functions $f_i$ paste together to a simple Morse function $f$ on $M_1 \# M_2$ such that $\Delta_2(\reeb(f)) = \Delta_2(M_1) + \Delta_2(M_2)$.
	
	The second statement is clear since $\rank(\pi_1(M))$ (by Grushko Theorem \cite{Magnus-book}), $\corank(\pi_1(M))$ (by \cite{Cornea}), $g(M)$ (by Haken Theorem \cite{Haken}) and $H_i(M)$ for $i=1, \ldots,n-1$ are additive with respect to the connected sum operation (in the case of $H_{n-1}(M_1 \# M_2)$ one of $M_i$ must be orientable).
\end{proof}

\begin{remark}
	Note that for a given $n\geq 3$ the number $\Delta_2(M)$ can be arbitrary large among all smooth $n$-manifolds $M$ without using the connected sum operation. Simply, take $M = \Sigma_g \times \es^{n-2}$. Then $\rank(\pi_1(M)) \geq 2g$ and $\corank(\pi_1(M)) = g$, so $\Delta_2(M) \geq 2g$ by the inequality (\ref{formula:Delta_2>= 2(rank-corank)}).
\end{remark}

%%%%%%%%%%%%%%%%%%%%%%%%%%%%%%%%%%%%%%%%%%%%%%%%%%

%%%%%%%%%%%%%%%%%%%%%%%%%%%%%%%%%%%%%%%%%%%%%%%%%%
\section{Simple Morse Functions on $3$-Manifolds}\label{section:3-manifolds}
%%%%%%%%%%%%%%%%%%%%%%%%%%%%%%%%%%%%%%%%%%%%%%%%%%

Hereafter, $M$ is a closed orientable $3$-manifold. Note that, by the proof of Proposition~\ref{proposition:inequalities_for_degree_2_vertices}, $\Delta_2(\reeb(f)) = 2(k_1 - \beta_1(\reeb(f))$ for a simple Morse function $f\colon M \to \R$ with exactly two extrema. The equality in (\ref{formula:Delta_2>= 2(g(M)-corank)}) gives a positive answer to all questions in Problem \ref{problem:main}.

\begin{corollary}\label{corollary:equality_Delta_2(M)=2(g-corank)}
	The following are equivalent:
	\begin{itemize}
		\item[(a)] There is a simple Morse function $f\colon M \to \R$ such that $k_1 = g(M)$ and $\beta_1(\reeb(f)) = \corank(\pi_1(M))$.
		\item[(b)] $\Delta_2(M) = 2 (g(M)-\corank(\pi_1(M)))$.
		\item[(c)] Any simple Morse function $f\colon M \to \R$ with exactly two extrema and $\Delta_2(\reeb(f)) = \Delta_2(M)$ has $k_1 = g(M)$ and $\beta_1(\reeb(f)) = \corank(\pi_1(M))$.
	\end{itemize}
\end{corollary}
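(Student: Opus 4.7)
The plan is to extract a single identity expressing $\Delta_2(\reeb(f))$ in terms of $k_0(f)$, $k_1(f)$, and $\beta_1(\reeb(f))$ that is valid for \emph{any} simple Morse function on an orientable closed $3$-manifold, and then to read off the three implications from this identity together with two standard bounds. The identity refines the observation at the start of Section~\ref{section:3-manifolds}, which only handled the case $k_0 = k_3 = 1$.

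To obtain it, I would combine the two relations already used in the proof of Proposition~\ref{proposition:inequalities_for_degree_2_vertices}, namely $\Delta_2(\reeb(f)) + \Delta_3(\reeb(f)) = k_1+k_2$ and $\beta_1(\reeb(f)) = -\tfrac{k_0+k_3}{2} + \tfrac{\Delta_3(\reeb(f))}{2} + 1$, with $\chi(M)=0$ (so $k_2 - k_3 = k_1 - k_0$). Eliminating $\Delta_3(\reeb(f))$, $k_2$, and $k_3$ yields
$$
\Delta_2(\reeb(f)) = 2\bigl(k_1(f) - k_0(f) + 1 - \beta_1(\reeb(f))\bigr).
$$
Iterated cancellation of $(0,1)$- and $(2,3)$-handle pairs reduces any simple Morse function to one with exactly two extrema, lowering $k_1$ by $k_0-1$ and $k_2$ by $k_3-1$; the resulting handle decomposition realises a Heegaard splitting of $M$, whence $k_1(f) - k_0(f) + 1 \geq g(M)$ for every $f$. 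Combined with $\beta_1(\reeb(f)) \leq \corank(\pi_1(M))$, this reproves~(\ref{formula:Delta_2>= 2(g(M)-corank)}) and isolates the equality criterion: $\Delta_2(\reeb(f)) = 2(g(M) - \corank(\pi_1(M)))$ precisely when $k_1(f) - k_0(f) + 1 = g(M)$ and $\beta_1(\reeb(f)) = \corank(\pi_1(M))$.

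For $(a)\Rightarrow(b)$, substituting $k_1(f) = g(M)$ and $\beta_1(\reeb(f)) = \corank(\pi_1(M))$ into the identity gives $\Delta_2(\reeb(f)) = 2(g(M) - k_0(f) + 1 - \corank(\pi_1(M)))$. This value must dominate $\Delta_2(M) \geq 2(g(M) - \corank(\pi_1(M)))$ by~(\ref{formula:Delta_2>= 2(g(M)-corank)}), which forces $k_0(f) \leq 1$ and hence $k_0(f)=1$; then $\Delta_2(\reeb(f)) = 2(g(M)-\corank(\pi_1(M)))$ combined with the lower bound gives $(b)$. For $(b)\Rightarrow(c)$, any simple Morse function $f$ with two extrema and $\Delta_2(\reeb(f)) = \Delta_2(M) = 2(g(M) - \corank(\pi_1(M)))$ satisfies $k_1(f) - \beta_1(\reeb(f)) = g(M) - \corank(\pi_1(M))$ by the identity with $k_0=1$, and the inequalities $k_1(f) \geq g(M)$ and $\beta_1(\reeb(f)) \leq \corank(\pi_1(M))$ force both to be equalities. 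Finally, $(c)\Rightarrow(a)$ is a matter of existence: applying Lemma~4.5 of~\cite{Michalak-DCG} to any simple Morse function on $M$ realising $\Delta_2(M)$ yields one with exactly two extrema and the same $\Delta_2$, and $(c)$ applied to it produces the function asserted in $(a)$.

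The hard part will be the reduction $k_0(f)=1$ inside $(a)\Rightarrow(b)$: one must verify that the computed value of $\Delta_2(\reeb(f))$ cannot fall below the already established lower bound~(\ref{formula:Delta_2>= 2(g(M)-corank)}), which is precisely what rules out $k_0(f)\geq 2$. Once the identity and this reduction are in place, the remaining implications are essentially bookkeeping.
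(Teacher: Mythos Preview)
Your proof is correct and follows essentially the same route as the paper's: the key is the identity $\Delta_2(\reeb(f)) = 2(k_1 - \beta_1(\reeb(f)))$ (for two extrema) together with the two inequalities $k_1 \geq g(M)$ and $\beta_1(\reeb(f)) \leq \corank(\pi_1(M))$, which forces both to be equalities once their difference is fixed. Your treatment is somewhat more careful than the paper's in $(a)\Rightarrow(b)$, where you do not assume two extrema and instead deduce $k_0=1$; the paper glosses over this point (it is also immediate from your observation that $k_1 - k_0 + 1 \geq g(M)$, which with $k_1=g(M)$ gives $k_0\leq 1$ directly, making the detour through~(\ref{formula:Delta_2>= 2(g(M)-corank)}) unnecessary).
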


\begin{proof}
	By the formula $\Delta_2(\reeb(f)) = 2(k_1 - \beta_1(\reeb(f)))$ the only non-trivial implication is from $(1)$ or $(2)$ to $(3)$. If $(2)$ holds and $f\colon M \to \R$ is a simple Morse function such that $k_0 =1 = k_3$ and $\Delta_2(\reeb(f)) = \Delta_2(M)$, then $2(g(M) - \corank(\pi_1(M))) = 2(k_1 - \beta_1(\reeb(f)))$. Thus 
	$$
	0 = (k_1 - g(M)) + (\corank(\pi_1(M)) - \beta_1(\reeb(f))) 
	$$
	and both the expressions in brackets are non-negative. Therefore $k_1 = g(M)$ and $\beta_1(\reeb(f))=\corank(\pi_1(M))$.
\end{proof}

\subsection{Low values of $\Delta_2(M)$}\label{subsection:low_values_of_Delta_2}

Now, we will relate the condition $\Delta_2(M)=0$ to the known facts of $3$-dimensional topology and express them in terms of $\corank(\pi_1(M))$.

\begin{proposition}\label{proposition:3-manifold_with_free_group}
	Let $M$ be an orientable closed $3$-manifold and $r=\corank(\pi_1(M))$. The following are equivalent:
	\begin{itemize}
		\item[(a)] $M \cong \#_{i=1}^r \left( \es^2 \times \es^1 \right)$ (for $r=0$ it means $M \cong \es^3$);
		\item[(b)] $\Delta_2(M)=0$;
		\item[(c)] $g(M) = r$;
		\item[(d)] $\rank(\pi_1(M)) = r$;
		\item[(e)] $\pi_1(M) \cong F_{r}$.
	\end{itemize}
\end{proposition}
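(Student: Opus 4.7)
The plan is to prove the equivalences as a cycle: first (a) $\Rightarrow$ (b), (c), (d), (e), and then (b) $\Rightarrow$ (c) $\Rightarrow$ (d) $\Rightarrow$ (e) $\Rightarrow$ (a). The backbone of the argument is the elementary chain
\[
g(M) \geq \rank(\pi_1(M)) \geq \corank(\pi_1(M)) = r,
\]
in which the first inequality comes from a Heegaard splitting of genus $g(M)$ giving a presentation of $\pi_1(M)$ with $g(M)$ generators, combined with the sharp lower bound $\Delta_2(M) \geq 2(g(M)-r)$ established in Proposition~\ref{proposition:inequalities_for_degree_2_vertices}.

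For (a) $\Rightarrow$ all others, take the standard genus-$r$ Heegaard splitting of $\#_{i=1}^{r}(\es^2 \times \es^1)$: it supports a simple Morse function with $k_0=k_3=1$ and $k_1=k_2=r$ whose canonical Reeb graph has $\beta_1 = r$ and no vertices of degree $2$. This simultaneously yields (b), (c), (d) and (e). For the reverse path, (b) $\Rightarrow$ (c) follows at once, since $0 = \Delta_2(M) \geq 2(g(M)-r) \geq 0$ forces $g(M) = r$, and (c) $\Rightarrow$ (d) is then squeezed from the displayed chain.

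For (d) $\Rightarrow$ (e) I would invoke the Hopfian property of finitely generated free groups. Pick generators $g_1, \ldots, g_r$ of $\pi_1(M)$ and an epimorphism $\phi \colon \pi_1(M) \twoheadrightarrow F_r$ realizing the corank, and define $\psi \colon F_r \to \pi_1(M)$ by sending a free basis to $(g_1,\ldots,g_r)$. Then $\psi$ is surjective, so $\phi \circ \psi$ is a surjective endomorphism of $F_r$, hence an automorphism by the Hopfian property. Therefore $\psi$ is also injective, and hence an isomorphism.

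The genuinely topological step, which is the main obstacle, is (e) $\Rightarrow$ (a): an orientable closed $3$-manifold with free fundamental group is a connected sum of copies of $\es^2 \times \es^1$. This is the classical theorem of Stallings (see Hempel \cite{Hempel}). Its proof relies on the Kneser--Milnor prime decomposition together with the sphere theorem, identifying every prime factor as either $\es^3$ (simply-connected case, by the Poincar\'e conjecture) or $\es^2 \times \es^1$ (the only prime orientable $3$-manifold with nontrivial free fundamental group). Applying it to $M$ with $\pi_1(M) \cong F_r$ and using Grushko's theorem to match the number of summands closes the cycle.
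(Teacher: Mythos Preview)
Your proof follows the same cycle of implications as the paper, with the same key ingredients: the inequality~\eqref{formula:Delta_2>= 2(g(M)-corank)} for (b)$\Rightarrow$(c), the chain $g(M)\geq\rank(\pi_1(M))\geq\corank(\pi_1(M))$ for (c)$\Rightarrow$(d), a free-group rigidity argument for (d)$\Rightarrow$(e) (you use the Hopfian property where the paper argues via Nielsen transformations, which is equivalent), and the classical $3$-manifold classification for (e)$\Rightarrow$(a). The one place where your sketch is thinner than the paper is (a)$\Rightarrow$(b): asserting that the genus-$r$ splitting ``supports'' a simple Morse function with $\beta_1(\reeb(f))=r$ hides the actual point, since the \emph{ordered} function coming from that splitting has a tree as its Reeb graph; the paper explains that on each $\es^2\times\es^1$ factor the $2$-handle is attached along a meridian disjoint from the $1$-handle, so one may attach the $2$-handle first to $\partial\de^3$ (where it splits the level sphere), yielding the degree-$3$ vertices needed for the cycle.
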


\begin{proof}
	The manifold $\es^2\times \es^1$ has a Heegaard splitting of genus $1$ such that a $2$-handle is attached along an embedding $\varphi \colon \es^1 \times \de^1 \to \partial H$, where $\varphi({\es^1 \times \{0\}})$ is a meridian $\partial \de^2 \times \{1\}$ of the solid torus $H=\de^2 \times \es^1$. Thus we may assume that the image of $\varphi$ is disjoint from a $1$-handle attached to $\de^3$ to form $H$, so these handles can be attached in any order. Therefore a Morse function $f$, corresponding to a handle decomposition with $2$-handle attached before a $1$-handle (see \cite[Theorem 3.12]{Milnor}), has a unique critical point of index $2$ below a unique critical point of index~$1$. Hence $f$ is simple and since the $2$-handle is attached to the boundary of the ball $\de^3$, it splits the level set of $f$ into two parts. Therefore $\reeb(f)$ has no vertices of degree $2$ and  $\beta_1(\reeb(f))=1$. Thus (a) implies (b) by Lemma \ref{lemma:gluing_functions_for_connected_sum}.
	
	By (\ref{formula:upper_and_lower_bounds_for_delta_2_using_Heegaard_genus}) we get (c) from (b).	The implication (c) to (d) follows by $\corank(\pi_1(M)) \leq \rank(\pi_1(M) \leq g(M)$.

	Now, assume (d) and take an epimorphism $\varphi \colon \pi_1(M) \to F_r$. If $\{a_1,\ldots,a_r\}$ generates $\pi_1(M)$, then $S=\{\varphi(a_1),\ldots,\varphi(a_r)\}$ generates $F_r$. This generating set is equivalent, under Nielsen transformations, to free generating set of $F_r$ with also $r$ elements, so $S$ is also a free generating set. Thus a homomorphism $\phi\colon F_r \to \pi_1(M)$ defined on $S$ by $\phi(\varphi(a_i))=a_i$ is an inverse for $\varphi$, so they are isomorphisms and (e) holds.
	
	Finally, the implication (e) to (a) is a standard fact of $3$-manifold topology together with the Poincar{\'e}  conjecture to eliminate homotopy spheres in the prime decomposition of $M$ (see~e.g.~\cite{Hempel}).
\end{proof}

\begin{theorem}\label{theorem:Delta_2=2_iff_lens_space}
	For an orientable $3$-manifold $M$, $\Delta_2(M)=2$ if and only if $M= \left(\mathop{\Huge \#}_{i=1}^r \left( \es^2 \times \es^1 \right) \right) \# L $, where $r=\corank(\pi_1(M))$ and $L$ is a lens space.
\end{theorem}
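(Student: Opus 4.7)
For the sufficiency direction, the plan is to apply the equality case of Lemma~\ref{lemma:gluing_functions_for_connected_sum}. Each summand $\es^2 \times \es^1$ has $\Delta_2 = 0 = 2(g - \corank)$ by Proposition~\ref{proposition:3-manifold_with_free_group}. The lens space $L$ satisfies $g(L) = 1$ and $\corank(\pi_1(L)) = 0$, and the lower bound~(\ref{formula:Delta_2>= 2(g(M)-corank)}) together with the upper bound $\Delta_2(L) \leq 2g(L)$ from~(\ref{formula:upper_and_lower_bounds_for_delta_2_using_Heegaard_genus}) pinch $\Delta_2(L) = 2 = 2(g(L)-\corank(\pi_1(L)))$. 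So each summand attains equality in~(\ref{formula:Delta_2>= 2(g(M)-corank)}), and the lemma yields $\Delta_2(M) = 0\cdot r + 2 = 2$.

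For the necessity direction, set $r = \corank(\pi_1(M))$. From $r \leq \rank(\pi_1(M)) \leq g(M)$ and~(\ref{formula:Delta_2>= 2(g(M)-corank)}) one obtains $r \leq g(M) \leq r + 1$, and Proposition~\ref{proposition:3-manifold_with_free_group} excludes $g(M) = r$ because that would force $\Delta_2(M) = 0$. Hence $g(M) = r + 1$ with equality in~(\ref{formula:Delta_2>= 2(g(M)-corank)}), and Corollary~\ref{corollary:equality_Delta_2(M)=2(g-corank)} supplies a simple Morse function $f\colon M \to \R$ with $k_0 = k_3 = 1$, $k_1 = k_2 = r + 1$, $\beta_1(\reeb(f)) = r$, and $\Delta_2(\reeb(f)) = 2$. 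A count of connected components of level sets in canonical form (preserved at each degree-$2$ vertex, changing by $\pm 1$ at each degree-$3$ split or merge, and equal to $1$ at both ends) pins the two degree-$2$ vertices to a single index-$1$ vertex $v_1$ just above the minimum and a single index-$2$ vertex $v_2$ just below the maximum.

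The structural heart of the argument is that throughout the middle of $\reeb(f)$ between $v_1$ and $v_2$ the total genus of each level surface equals $1$: it changes only at degree-$2$ vertices, is $1$ immediately above $v_1$, and must return to $1$ before $v_2$. So every middle level surface consists of exactly one torus and some number of spheres. Since every separating simple closed curve on $T^2$ is null-homotopic, an index-$2$ split of a torus produces precisely one torus and one sphere, and the unique torus component therefore traces out a simple path of edges from $v_1$ to $v_2$ through $\reeb(f)$. I then choose a spanning tree $T'$ of $\reeb(f)$ containing this $T^2$-path together with the two extremity edges adjacent to the minimum and the maximum, extended by $\es^2$-edges to span the remaining vertices; the $r$ non-tree edges are forced to be $\es^2$-edges. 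Points on these $r$ non-tree edges at distinct $f$-values give $r$ pairwise disjoint $2$-spheres $S_1, \ldots, S_r \subset M$, each non-separating (its edge lies on a cycle) and with non-separating union (since $T'$ survives the $r$ removals). Standard sphere surgery then yields $M = (\#_{i=1}^r \es^2 \times \es^1) \# M_0$, and the additivity of Heegaard genus (Haken) and of corank (Cornea) gives $g(M_0) = 1$ and $\corank(\pi_1(M_0)) = 0$. The only closed orientable $3$-manifolds with these values are $\es^3$ and lens spaces, and $\es^3$ is excluded since it would give $\Delta_2(M) = 0$. Hence $M_0$ is a lens space.

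The main obstacle I anticipate is the structural claim that the edges carrying $T^2$ in the middle trace out a single simple path from $v_1$ to $v_2$. This rests on the conservation of total genus across degree-$3$ vertices, a careful bookkeeping, together with the topological observation that splitting a torus along a separating (null-homotopic) curve always produces exactly one torus and one sphere. Once this is in hand, the spanning tree construction and the ensuing sphere surgery proceed by formal combinatorial and topological arguments.
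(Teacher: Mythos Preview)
Your argument is correct and follows the same overall strategy as the paper: for sufficiency, Lemma~\ref{lemma:gluing_functions_for_connected_sum}; for necessity, pin down $g(M)=r+1$, pass to a canonical-form Morse function, exhibit $r$ pairwise disjoint non-separating $2$-spheres among the level-set components, split off $r$ copies of $\es^2\times\es^1$, and identify the residual genus-$1$ piece as a lens space. The one substantive difference is in the splitting step. The paper reads off $\pi_1(M)\cong\pi_1(M|\N)*F_{r}$ as a (trivial) HNN extension along the simply-connected spheres and then invokes the Kneser conjecture to promote this free-product decomposition of $\pi_1$ to a connected-sum decomposition of $M$. You instead cut along the spheres, cap off, and obtain $M=M_0\#(\#_{r}\,\es^2\times\es^1)$ directly from the elementary fact that regluing two boundary spheres in a connected orientable $3$-manifold contributes an $\es^2\times\es^1$ summand; this bypasses Kneser entirely. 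Your spanning-tree bookkeeping for the torus path also makes explicit what the paper compresses into a single sentence. One small slip: the closed orientable $3$-manifolds with Heegaard genus $1$ are the lens spaces and $\es^2\times\es^1$, not $\es^3$ (which has genus $0$); but your conclusion is unaffected since $\corank(\pi_1(M_0))=0$ already rules out $\es^2\times\es^1$.
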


\begin{proof}
	
	If $M= \left(\mathop{\Huge \#}_{i=1}^r \left( \es^2 \times \es^1 \right) \right) \# L $, then $2 \leq \Delta_2(M) \leq \Delta_2(L)=2$ by Lemma \ref{lemma:gluing_functions_for_connected_sum}, the inequality (\ref{formula:Delta_2>= 2(g(M)-corank)}) and any example of simple Morse function on $L$ inducing a Heegaard splitting of genus $g(L)=1$. Thus $\Delta_2(M)=2$.
	
	Conversely, assume that $\Delta_2(M)=2$. The inequality (\ref{formula:Delta_2>= 2(g(M)-corank)}) implies that $1\geq g(M)-\corank(\pi_1(M))$, hence either $g(M)=\corank(\pi_1(M))$ and then $\Delta_2(M)=0$ by Proposition \ref{proposition:3-manifold_with_free_group}, a contradiction, or $g(M)=\corank(\pi_1(M))+~1$. 
	Let $f\colon M \to \R$ be a simple Morse function with $\reeb(f)$ in the canonical form and $\Delta_2(\reeb(f)) = 2$, so $k_1 = g(M)$ and $\beta_1(\reeb(f)) = \corank(\pi_1(M)) = g(M)-1$ by Corollary \ref{corollary:equality_Delta_2(M)=2(g-corank)}. Starting from the minimum of $f$ and looking at successive critical points, they form the following list of indices: $0,1,2,1,2,1,\ldots,2,1,2,3$. First $\beta_1(\reeb(f))$ critical points of index $2$ correspond to handles attached to a manifold with boundary $T^2$ that split a level set into two components $T^2$ and $\es^2$, and the last $\beta_1(\reeb(f))$ critical points of index $1$ correspond to handles which merge these two components of level sets to a one being $T^2$.

	Therefore we can find a non-separating two-sided hypersurface $\N$ in $M$ with $\beta_1(\reeb(f)) = g(M)-1$ connected components all being $S^2$. Let $M|\N$ be the manifold obtained by cutting $M$ along $\N$. Re-gluing boundary components leads to a description of $\pi_1(M)$ as an HNN extension of $\pi_1(M |\N)$ with $g(M)-1$ stable letters. Since glued submanifolds are simply-connected, this extension is trivial and so $\pi_1(M) \cong \pi_1(M |\N) * F_{g(M) -1}$. Therefore $M = \left(\mathop{\Huge \#}_{i=1}^{g(M)-1} \left( \es^2 \times \es^1 \right) \right) \# M'$ by Knesser conjecture (see \cite{Hempel}) and the previous proposition. Thus $M'$ is a lens space since its Heegaard genus is $g(M')=1$ and $\corank(\pi_1(M')) = 0$ by the additivity of these numbers with respect to the connected sum operation (cf. \cite{Haken} and \cite{Cornea}, respectively).	
\end{proof}

As a conclusion, we have found a sufficient condition on $M$ to be a counterexample to the question (a) in Problem \ref{problem:main}.

\begin{corollary}\label{corollary:at_least_4_degree_2_vertices_if_genus_and_corank_differs_by_1}
	If $M$  does not have a lens space %as a summand 
	in its prime decomposition and $g(M)=\corank(\pi_1(M))+1$, then $\Delta_2(M) \geq 4 > 2(g(M)-\corank(\pi_1(M)))$ and so any simple Morse function $f\colon M \to \R$ with $k_1 = g(M)$ has the strict inequality $\beta_1(\reeb(f)) < \corank(\pi_1(M))$. 
\end{corollary}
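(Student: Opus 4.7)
The plan is to combine the inequality~(\ref{formula:Delta_2>= 2(g(M)-corank)}), the parity constraint from Proposition~\ref{proposition:Euler_characteristic_is_Delta_2(M)_mod_2}, and the classification in Theorem~\ref{theorem:Delta_2=2_iff_lens_space} to push the lower bound on $\Delta_2(M)$ from $2$ to $4$, and then apply Corollary~\ref{corollary:equality_Delta_2(M)=2(g-corank)} to convert this into the non-existence statement about Morse functions.

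First I would observe that, since $g(M)=\corank(\pi_1(M))+1$, the bound~(\ref{formula:Delta_2>= 2(g(M)-corank)}) already gives $\Delta_2(M)\geq 2$. Since $M$ is a closed orientable $3$-manifold, $\chi(M)=0$, so Proposition~\ref{proposition:Euler_characteristic_is_Delta_2(M)_mod_2} forces $\Delta_2(M)$ to be even. Hence the only possibilities consistent with the above bound that are below $4$ are $\Delta_2(M)=2$. But Theorem~\ref{theorem:Delta_2=2_iff_lens_space} characterises this case exactly: it occurs iff $M$ is the connected sum of copies of $\es^2\times\es^1$ with a single lens space summand, which is excluded by hypothesis. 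Therefore $\Delta_2(M)\geq 4>2(g(M)-\corank(\pi_1(M)))$.

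For the second half I would use Corollary~\ref{corollary:equality_Delta_2(M)=2(g-corank)}: the equivalence of its parts (a) and (b) says that the existence of a simple Morse function $f$ with simultaneously $k_1=g(M)$ and $\beta_1(\reeb(f))=\corank(\pi_1(M))$ is equivalent to the equality $\Delta_2(M)=2(g(M)-\corank(\pi_1(M)))$. Since we have just shown that this equality fails (in fact strictly), no such $f$ exists. For any simple Morse function with $k_1=g(M)$ we always have $\beta_1(\reeb(f))\leq\corank(\pi_1(M))$, so the failure of equality forces the strict inequality $\beta_1(\reeb(f))<\corank(\pi_1(M))$.

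There is no real obstacle here beyond correctly chaining the three earlier results; the only small subtlety is checking that the parity step and the classification step are applied in the right order so that $\Delta_2(M)=2$ is genuinely excluded (as opposed to $\Delta_2(M)=3$, which is ruled out automatically by parity). Nothing about circle bundles or the $\Omega$-invariant is needed for this corollary, since it is a purely formal consequence of the three ingredients above.
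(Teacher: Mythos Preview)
Your argument is correct and matches the paper's intended reasoning: the corollary is stated there without proof as an immediate consequence of Theorem~\ref{theorem:Delta_2=2_iff_lens_space}, and you have spelled out exactly that deduction, together with the parity observation and the invocation of Corollary~\ref{corollary:equality_Delta_2(M)=2(g-corank)} for the second claim. The parity step is in fact already implicit in the formula $\Delta_2(\reeb(f))=2(k_1-\beta_1(\reeb(f)))$ for orientable $3$-manifolds, so you could omit the reference to Proposition~\ref{proposition:Euler_characteristic_is_Delta_2(M)_mod_2}, but invoking it does no harm.
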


\subsection{Group presentation invariant}\label{subsection:Omega_invariant}

Consider a group presentation $\P= \geng{ x_1,\ldots, x_n\, |\, r_1, \ldots, r_m }$ with $\rank(\P) = n$ generators and $m$ relators $r_i = r_i(x_1,\ldots,x_n)$. Its \emph{deficiency} ${\rm def}(\P)$ is defined as $n-m$. The \emph{deficiency} ${\rm def}(G)$ of a finitely presented group $G$ is the maximal deficiency among all its finite presentations.

Note that a genus $g$ Heegaard spliting of a manifold $M$ induce a presentation of $\pi_1(M)$ with deficiency $0$. 
D. Epstein \cite{Epstein} showed that ${\rm def}(\pi_1(M)) = 0$ if $M$ is a closed orientable $3$-manifold.

We present the construction of an invariant of $\pi_1(M)$ in terms of its presentations, which will be essential in the study of Problem \ref{problem:main}.

\begin{definition}\label{definition:simplicity}
	Let $\P=\geng{ x_1,\ldots, x_n \,|\, r_1, \ldots, r_k }$ be a presentation of a non-free group, so $k\geq 1$. %of deficiency~$0$. 
	We define $\Omega = \Omega(\P)$ to be the minimum positive number such that %, %after some reordering of generators and relators of $\P$,
	for $1 \leq i \leq \min \{n-\Omega, k\}$ the relator $r_i$ can be written as a word in only first $\Omega+i-1$ generators, i.e. $r_i = r_i(x_1,\ldots,x_{\Omega+i-1})$.
	
	Thus $r_1$ uses first $\Omega$ generators, $r_2$ uses first $\Omega +1$ generators and so on. 
	Clearly, $\Omega(\P) \leq n = \rank(\P)$.
	
	For a finitely presented non-free group $G$ and a natural number $n$ we define the number $\Omega_n(G)$ to be
	$$
	\Omega_{n}(G) := \min \left\{\Omega(\P) \,\colon\, G \cong \P,  \operatorname{def}(\P)= \operatorname{def}(G) \text{ and } \rank(\P)=n \right\},
	$$
	if there exists such a presentation $\P$ of $G$, or $\Omega_n(G) = \infty$ otherwise.
	Note that $\Omega(\P)\neq 0$ since presentations of maximal deficiency have no trivial relators. Moreover,  $\Omega_n(G) \geq \Omega_{n+1}(G)$, which follows by adding a new generator $y$ and the relator $y$ to a presentation realizing $\Omega_n(G)$ if it exists. Thus the condition $\rank(\P)=n$ in the definition of $\Omega_n(G)$ can be equivalently substituted by $\rank(\P) \leq n$. Hence
	$$
	n \geq \Omega_n(G) \geq \Omega_{n+1}(G) \geq \ldots \geq 1,
	$$	
	if $G$ admits a presentation of deficiency $\operatorname{def}(G)$ with $n$ generators. Therefore, this sequence stabilizes for some number of generators and we define the number $\Omega(G)$ to be the minimum over all $\Omega_n(G)$, i.e.
	$$
	\Omega(G) := \min_n \Omega_n(G) = \min \{\Omega(\P) \,\colon\, G \cong \P,  \operatorname{def}(\P)=\operatorname{def}(G) \}.
	$$
\end{definition}

We do not know whether any finitely presented group $G$ has a presentation of deficiency $\operatorname{def}(G)$ and $\rank(G)$ generators. The paper \cite{Rapaport} of E. Rapaport provides some results on this interesting problem.

For a $3$-manifold $M$ with non-free $\pi_1(M)$ there are bounds 
$$
g(M) \geq \Omega_{g(M)}(\pi_1(M)) \geq \Omega(\pi_1(M)) \geq 1.
$$
Note that $\rank(\pi_1(M))$ can be smaller than $g(M)$ (see examples of such Seifert manifolds in \cite{Zieschang} of  M. Boileau and H. Zieschang).

\begin{lemma}\label{lemma:simplicity_of_torsion_free_group}
	If $G$ is a non-trivial, non-free, torsion-free group, then $\Omega(G) \geq 2$.
\end{lemma}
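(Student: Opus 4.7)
The plan is to argue by contradiction: assume $\Omega(G) = 1$ and use torsion-freeness to peel off generators one at a time until the remaining presentation forces $G$ to be cyclic or free, contradicting the hypotheses.

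I would start by fixing a presentation $\P = \langle x_1, \ldots, x_n \mid r_1, \ldots, r_k \rangle$ of $G$ realising $\Omega(G) = 1$, so $\operatorname{def}(\P) = \operatorname{def}(G)$ and $\Omega(\P) = 1$. Two easy observations come first: $k \geq 1$ because $G$ is non-free, and $n \geq 2$ because every one-generator presentation is cyclic, while $G$ being non-trivial, non-free and torsion-free forbids $G$ from being cyclic. Then $\Omega(\P) = 1$ together with $n \geq 2$ forces $r_1 = x_1^m$ for some $m$, and maximality of deficiency rules out $m = 0$. Torsion-freeness of $G$ upgrades $x_1^m = 1$ in $G$ to $x_1 = 1$ in $G$, i.e.\ $x_1$ lies in the normal closure of $r_1, \ldots, r_k$ in the free group on $x_1, \ldots, x_n$.

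The central step is a three-move Tietze reduction: add $x_1$ as a relator (legitimate by the previous sentence), drop $r_1 = x_1^m$ as a consequence of $x_1$, and use the relator $x_1$ to eliminate the generator $x_1$. The outcome is
$$
\P^{(1)} = \langle x_2, \ldots, x_n \mid r_2', \ldots, r_k' \rangle, \qquad r_i' := r_i|_{x_1 = 1},
$$
a presentation of $G$ with the same deficiency, and, after relabeling $y_i := x_{i+1}$, still with $\Omega(\P^{(1)}) = 1$ because the triangular condition ``$r_i$ uses only $x_1, \ldots, x_i$'' rewrites as ``$r_{i+1}'$ uses only $y_1, \ldots, y_i$''. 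I would then iterate this reduction, producing $\P^{(j)}$ with $n - j$ generators, $k - j$ relators, and the same deficiency, for as long as $n - j \geq 2$ and $k - j \geq 1$. When one of these inequalities first fails, $\P^{(j)}$ either has a single generator, making $G$ cyclic, or has no relators, making $G$ free of rank $n - j$ (trivial if $n - j = 0$); both cases contradict the standing hypotheses on $G$, so $\Omega(G) \geq 2$.

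The main obstacle to handle carefully is that at every stage the new first relator must remain non-trivial, for only then does torsion-freeness produce the next $y_1 = 1$ relation. This is where the maximum-deficiency assumption earns its keep: a trivial relator in $\P^{(j+1)}$ could be dropped to strictly boost the deficiency, contradicting $\operatorname{def}(\P^{(j+1)}) = \operatorname{def}(G)$. Preservation of the $\Omega = 1$ structure through each reduction is a routine bookkeeping check on how the substitution $x_1 \mapsto 1$ interacts with the triangular constraint, but is the glue that keeps the induction running.
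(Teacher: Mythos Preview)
Your argument is correct and follows essentially the same approach as the paper: both assume $\Omega(G)=1$, observe that $r_1=x_1^m$ with $m\neq 0$ by maximality of deficiency, and then iterate a Tietze reduction (delete $x_1$ and $r_1$, preserving both $\operatorname{def}(\P)=\operatorname{def}(G)$ and the condition $\Omega(\P)=1$) until a contradiction emerges. The only cosmetic difference is where the contradiction is cashed in: the paper stops the iteration at a stage where $x_1\neq 1$ in $G$ and invokes torsion directly, whereas you use torsion-freeness at every step to force $x_1=1$ and instead terminate when the presentation becomes one-generator (cyclic) or relator-free (free).
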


\begin{proof}
	Let $\P=\geng{ x_1,\ldots, x_n\, |\, r_1, \ldots, r_k }$ be a presentation of $G$ realizing $\Omega(G)$.
	If $\Omega(G)=1$, then $r_1 = x_1^m$, where $m$ is non-zero since $\operatorname{def}(G)=\operatorname{def}(\P)$. If $m=\pm 1$, then $x_1 = 1$ in $G$ and so deleting $x_1$ and $r_1$ and all occurrences of $x_1$ in other relators (a Tietze transformation) we obtain another presentation of $G$ which realizes $\Omega(G)$. Thus we may assume that $|m| \geq 2$. However, if still $x_1=1$ in $G$, so $x_1$ is a consequence of relators $r_1,\ldots,r_k$, then we replace the relator $r_1=x_1^m$ by $x_1$ (using Tietze transformations we add $x_1$ as a new relator and then remove $r_1$ as a consequence of $x_1$). Again, delete $r_1$ and $x_1$. Since this operation decreases rank of the presentation, we may finally assume that $r_1 = x_1^m$, $|m| \geq 2$ and $x_1 \neq 1$ in $G$. Thus $x_1$ is of finite order in $G$, a contradiction.	
\end{proof}

\begin{example}
	The discrete Heisenberg group $H(3,\Z)$ is generated by two elements and is torsion-free (it can be see algebraically or by noting that the Heisenberg manifold is aspherical), so $\Omega(H(3,\Z)) = 2$.
\end{example}

\begin{theorem}\label{theorem:simplicity_cycles_and_degree_2_vertices}
	Let $f\colon M \to \R$ be a simple Morse function on an orientable closed $3$-manifold $M$ with $k_i$ critical points of index $i$ and $k_0=1$. 
	Then
	$$
	\Omega_{k_1}(\pi_1(M)) \leq k_1-\beta_1(\reeb(f)) = \frac{1}{2}\Delta_2(\reeb(f)).
	$$
	As a consequence, 
	$$
	2(\Omega(\pi_1(M))) \leq \Delta_2(M).
	$$
\end{theorem}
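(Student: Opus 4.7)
The plan is to extract from $f$ a presentation of $\pi_1(M)$ whose ``triangular'' structure is dictated by the cycle rank $\beta_1(\reeb(f))$, and then read off the claimed bound directly. Since Epstein's theorem gives $\operatorname{def}(\pi_1(M))=0$ whereas the raw handle-decomposition presentation has deficiency $1-k_3$, I first reduce to $k_3=1$. If $k_3>1$, then $-f$ has more than one minimum and a standard Morse-theoretic argument (Milnor's first cancellation theorem, applied to $-f$) provides a cancellable $(2,3)$-pair for $f$. The cancelled index-$2$ critical point is of splitting type (degree-$3$ in $\reeb(f)$), and cancellation excises from $\reeb(f)$ only a ``Y''-shaped subgraph with two vertices and two edges; since this Y is a tree, cancellation preserves $\beta_1(\reeb(f))$ and $k_1$ while reducing each of $k_2$ and $k_3$ by one. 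After $k_3-1$ steps we obtain a simple Morse function $\tilde f$ with $k_0(\tilde f)=k_3(\tilde f)=1$, $k_1(\tilde f)=k_2(\tilde f)=k_1$ and $\beta_1(\reeb(\tilde f))=\beta_1(\reeb(f))$.

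Ordering the index-$1$ (resp.\ index-$2$) critical points of $\tilde f$ by critical value furnishes a presentation $\P=\langle a_1,\ldots,a_{k_1}\mid r_1,\ldots,r_{k_1}\rangle$ of $\pi_1(M)$ with $\rank(\P)=k_1$ and $\operatorname{def}(\P)=0=\operatorname{def}(\pi_1(M))$. Writing $g_j$ for the number of $1$-handles attached strictly below the $j$-th $2$-handle, its attaching circle represents a word in the free group on $a_1,\ldots,a_{g_j}$, so from the definition of $\Omega$,
\[
\Omega(\P)\;\leq\;\max_{1\leq j\leq k_1}\bigl(g_j-j+1\bigr).
\]
To bound the right-hand side by $k_1-\beta_1(\reeb(\tilde f))$, I introduce the ``handle surplus'' $d(t):=n_1(t)-n_2(t)$, where $n_i(t)$ counts index-$i$ critical points of $\tilde f$ below $t$; then $g_j-j+1=d(t_j^-)$ just below the $j$-th $2$-handle. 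Classifying each index-$1$ or index-$2$ critical point by the degree of its Reeb-graph vertex ($2$ for genus-changing, $3$ for component-changing) and checking the four cases yields the identity
\[
d(t)\;=\;g(t)-e(t)+1,
\]
where $g(t)$ and $e(t)$ are the total genus and number of components of $f^{-1}(t)$.

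Finally, tracking how $\reeb_t:=\reeb(\tilde f|_{M_t})$ evolves as $t$ increases shows that $\beta_1(\reeb_t)$ jumps by exactly $+1$ at each index-$1$ degree-$3$ (merging) critical point --- merging two leaves in the already connected $\reeb_t$ creates a single new cycle --- and is unchanged at every other critical point. Hence $\beta_1(\reeb(\tilde f))$ equals the number $\Delta_3^{(1)}(\reeb(\tilde f))$ of merging $1$-handles, so the remaining $k_1-\beta_1(\reeb(\tilde f))$ index-$1$ critical points are all genus-increasing, giving $g(t)\leq n_1^{(2)}(t)\leq k_1-\beta_1(\reeb(\tilde f))$. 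Combined with $e(t)\geq 1$ below the unique maximum of $\tilde f$, this yields $d(t)\leq g(t)\leq k_1-\beta_1(\reeb(\tilde f))$ at every index-$2$ critical level, completing the main inequality; the stated equality with $\tfrac12\Delta_2(\reeb(f))$ is the formula already proved inside the proof of Proposition~\ref{proposition:inequalities_for_degree_2_vertices} (valid when $k_0=1$). The consequence $2\,\Omega(\pi_1(M))\leq\Delta_2(M)$ follows by taking $f$ to realise $\Delta_2(M)$, arranging $k_0=1$ via \cite[Lemma 4.5]{Michalak-DCG}, and minimising over $k_1$. The main obstacle is the reduction step to $k_3=1$: one needs to be sure that a cancellable $(2,3)$-pair always exists when $k_3>1$ and that the cancellation can be realised through simple Morse functions, so that $\reeb(f)$ loses only a Y-shaped tree and $\beta_1$ is genuinely preserved; once this is in hand, the combinatorial chain $d(t)\leq g(t)\leq k_1-\beta_1(\reeb(\tilde f))$ is clean.
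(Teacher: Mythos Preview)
Your core inequality argument --- bounding $\Omega(\P)\le\max_j(g_j-j+1)=\max_j d(t_j^-)$ and then controlling $d(t)=g(t)-e(t)+1$ via the count of genus-increasing $1$-handles --- is correct and is a genuinely different route from the paper's. The paper instead invokes \cite[Proposition~4.7]{Michalak-DCG} to put $\reeb(f)$ into \emph{the} canonical form: that single prepackaged step simultaneously forces $k_0=k_3=1$ \emph{and} arranges the critical values in the explicit pattern
\[
p_1,\dots,p_{k_1-r},\;q_1,\,p_{k_1-r+1},\,q_2,\dots,q_r,\,p_{k_1},\;q_{r+1},\dots,q_{k_1}
\]
(with $r=\beta_1(\reeb(f))$), from which $\Omega(\P)\le k_1-r$ is read off in one line. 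Your approach trades this external machinery for an elementary Euler-characteristic identity (your $d(t)=g(t)-e(t)+1$ is exactly $\chi(\partial M_t)=2\chi(M_t)$ for the $3$-dimensional sublevel set) together with the observation $\Delta_3^{(1)}(\reeb(\tilde f))=\beta_1(\reeb(\tilde f))$; this works for the handle presentation coming from \emph{any} simple Morse function with $k_0=k_3=1$, not only one already in canonical form. That is a pleasant gain in directness.

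Where you should tighten things is precisely the reduction to $k_3=1$, and here the paper's shortcut is worth adopting. Your assertion that in an \emph{adjacent} cancellable $(2,3)$-pair the index-$2$ point must be of splitting type is in fact true: if the $2$-handle were genus-reducing, both arcs of its one-dimensional ascending manifold would lie in the single $S^2$ component above and hence both flow to the same maximum, violating the single-trajectory condition for cancellation. The genuine subtlety is earlier: producing an adjacent cancellable pair generally requires rearranging critical values, and rearrangement can change which attaching circles are separating, hence can alter vertex degrees in $\reeb(f)$. So the claim that the net effect on $\reeb(f)$ is ``excising a Y'' (and in particular that $\beta_1$ is preserved) does not follow automatically from Milnor's cancellation; it needs either a careful tracking of level-set components through the rearrangement, or simply the citation of \cite[Proposition~4.7]{Michalak-DCG}, which preserves $k_1$, $\beta_1(\reeb(f))$ and $\Delta_2(\reeb(f))$ in one stroke and is exactly what the paper uses.
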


\begin{proof}
	We may assume that $\reeb(f)$ is in the canonical form without changing $k_1$, $\beta_1(\reeb(f))$ and $\Delta_2(\reeb(f))$ (\cite[Proposition 4.7]{Michalak-DCG}). 
	The function $f$ leads to a Heegaard splitting of $M$ of genus $k_1$, and so to a presentation $\P$ of $\pi_1(M)$ with $k_1$ generators, corresponding to critical points $p_1,\ldots,p_{k_1}$ of index~$1$, and $k_1$ relators, corresponding to critical points $q_1, \ldots, q_{k_1}$  of index $2$. 
	
	Let $r=\beta_1(\reeb(f))$. Since $\reeb(f)$ is in the canonical form, we have the following sequence of critical values of~$f$:
	\begin{align*}
		f(p_1) &< \ldots < f(p_{k_1-r}) =: c \\
		c < f(q_1) < f(p_{k_1-r+1}) &< f(q_2) <  f(p_{k_1-r+2}) < \ldots < f(q_r) < f(p_{k_1}) =: c' \\
		c'  &< f(q_{r+1}) < f(q_{r+2}) < \ldots f(q_{k_1}).
	\end{align*}
	
	Therefore for each $1\leq i \leq r$ the $2$-handle corresponding to $q_i$ is attached to the boundary of the manifold consisting of  $k_1-r+i-1$ first $1$-handles, so the corresponding relator $r_i$ of $\P$ can be written as a word in only first $k_1-r+i-1$ generators. Thus $\Omega_{k_1}(\pi_1(M)) \leq k_1-r$.
	
	As in the proof of Proposition \ref{proposition:Euler_characteristic_is_Delta_2(M)_mod_2}, $\Delta_2(\reeb(f)) = 2(k_1 - r)$, what gives the desired characterization.
\end{proof}

As an easy application of the above theorem, if $M$ is a closed orientable $3$-manifold such that $\pi_1(M)$ is torsion-free and $g(M)=\corank(\pi_1(M))+1 \geq 2$ (e.g. $M$ is irreducible), then $\Omega(\pi_1(M)) \geq 2$ by Lemma \ref{lemma:simplicity_of_torsion_free_group} and so $\Delta_2(M) \geq 4$, what we proved earlier (Corollary \ref{corollary:at_least_4_degree_2_vertices_if_genus_and_corank_differs_by_1}).

\subsection{Circle bundles over an orientable surface}\label{subsection:circle_bundles}

A nice class of $3$-manifolds are orientable $\es^1$-bundles over a closed orientable surface $\Sigma_g$. They are classified by elements of $H^2(\Sigma_g,\Z) = \Z$, so any $e\in \Z$ corresponds to a bundle $M_e$ and conversely, any circle bundle over $\Sigma_g$ is isomorphic to $M_e$ for some $e\in \Z$. The number $e$ is the Euler number of the bundle $M_e \to \Sigma_g$. Note that $M_0 = \Sigma_g \times \es^1$ is the trivial bundle.

If $g=0$, then $M_e$ is a lens space $L(e,1)$ with the exception of $L(0,1) = \es^2 \times \es^1$ and $L(\pm 1,1) = \es^3$. Therefore this case if covered by Proposition \ref{proposition:3-manifold_with_free_group} and Theorem~\ref{theorem:Delta_2=2_iff_lens_space}.

Now, assume $g\geq 1$. The fundamental group of $M_e$ is an extension of $\pi_1(\Sigma_g)$ by $\pi_1(\es^1) \cong \Z$ and has the following presentation
$$
\pi_1(M_e) = \geng{a_1,b_1,\ldots,a_g,b_g,h \,|\, [a_i,h]=[b_i,h]=1 \text{ for $i=1,\ldots,g$ }, \prod_{i=1}^g [a_i,b_i] = h^e }.
$$	
Since
$$
H_1(M_e) = \ab(\pi_1(M_r)) = \Z^{2g}\times \Z/{e\Z},
$$
we have $\rank(\pi_1(M_e)) = 2g+1$ for $e\neq \pm 1$ and $\rank(\pi_1(M_e)) = 2g$ for $e=\pm1$. Furthermore, if an epimorphism $\varphi\colon \pi_1(M_e) \to F_r$ does not factorize through $\pi_1(M_e) \to \pi_1(\Sigma_g)$ induced by the bundle map, then $\varphi(h) \neq 1$ and so $r=1$ since $h$ is in the center of $\pi_1(M_e)$. Thus $\corank(\pi_1(M_e))=\corank(\pi_1(\Sigma_g)) = g$. 

Moreover, it is known that $g(M_e) = 2g+1$ for $e \neq \pm 1$ (Figure \ref{figure:heegaard_splitting_of_M_e} shows a Heegaard splitting of $M_e$ of such genus). M. Boileau and H. Zieschang \cite{Zieschang} showed that $g(M_{\pm 1}) = 2g$.

\begin{proposition}\label{proposition:Heegaard_splitting_of_M_e}
	There exists a simple Morse function $f\colon M_e \to \R$ such that $\beta_1(\reeb(f)) = g$, $\Delta_2(\reeb(f)) = 2g+2$ and which induces a Heegaard splitting of $M_e$ of genus $2g+1$.
	
	Thus for $e \neq \pm 1$ equality holds in (\ref{formula:Delta_2>= 2(g(M)-corank)}):
	$$
	\Delta_2(M_e) = 2(g(M_e) - \corank(\pi_1(M_e))) = 2g+2.
	$$
\end{proposition}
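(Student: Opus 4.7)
The plan is to construct the Morse function $f$ explicitly and then deduce the equality $\Delta_2(M_e) = 2g+2$ (for $e \neq \pm 1$) by combining the resulting upper bound with the lower bound (\ref{formula:Delta_2>= 2(g(M)-corank)}). I would start from the genus $2g+1$ Heegaard splitting of $M_e$ shown in Figure \ref{figure:heegaard_splitting_of_M_e} and its associated Morse function $f_0$ with $k_0 = k_3 = 1$ and $k_1 = k_2 = 2g+1$. The goal is to modify $f_0$ so that $g$ of the index-$2$ critical points lie below $g$ of the index-$1$ critical points, producing $g$ cycles in the Reeb graph while keeping each $k_i$ unchanged.

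The geometric input is a non-separating two-sided hypersurface $\N \subset M_e$ with exactly $g$ components. Choose $g$ pairwise disjoint non-separating simple closed curves $\gamma_1, \ldots, \gamma_g$ on $\Sigma_g$ whose complement is connected, for example the curves dual to the standard generators realizing the epimorphism $\pi_1(\Sigma_g) \to F_g$. Their preimages under the bundle projection $M_e \to \Sigma_g$ form $g$ disjoint tori, and their union $\N$ is non-separating since $\Sigma_g$ cut along the $\gamma_i$ remains connected. By the equivalence of conditions recalled in Section \ref{section:basic_notions}, $\N$ corresponds to a simple Morse function $f$ with $\beta_1(\reeb(f)) = g$. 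Running the construction of \cite{Michalak-DCG} on top of the fixed Heegaard splitting, each torus component of $\N$ can be placed as a regular level sitting between one index-$2$ and one index-$1$ critical value, contributing exactly one cycle to $\reeb(f)$ while preserving $k_1 = 2g+1$.

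The remaining numerics are automatic: from $k_0 = k_3 = 1$ and $k_1 = k_2 = 2g+1$ the identity $\Delta_2(\reeb(f)) = 2(k_1 - \beta_1(\reeb(f)))$ derived in the proof of Proposition \ref{proposition:inequalities_for_degree_2_vertices} gives $\Delta_2(\reeb(f)) = 2g+2$. For $e \neq \pm 1$, using $g(M_e) = 2g+1$ and $\corank(\pi_1(M_e)) = g$ in (\ref{formula:Delta_2>= 2(g(M)-corank)}) yields $\Delta_2(M_e) \geq 2g+2$, which combined with the constructed $f$ gives the claimed equality. The main obstacle is the rearrangement step: one must verify that $g$ of the index-$2$ critical points can be pushed below $g$ of the index-$1$ critical points via Milnor's rearrangement theorem, which reduces to disjointness between the descending spheres of the former and the ascending spheres of the latter in a common level surface. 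For the $\es^1$-bundle $M_e$ this disjointness reduces to the geometric disjointness of the base curves $\gamma_i$, which we secured by construction.
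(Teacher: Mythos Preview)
Your overall strategy and the final numerics are correct and match the paper's: build a simple Morse function with $k_0=k_3=1$, $k_1=k_2=2g+1$ and $\beta_1(\reeb(f))=g$, then invoke (\ref{formula:Delta_2>= 2(g(M)-corank)}) for $e\neq\pm1$. The problem lies in the construction of $f$ itself, where you blend two ingredients that are never actually linked.

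The non-separating tori $\N$ you produce from the bundle projection do witness $\corank(\pi_1(M_e))\ge g$ and, via the equivalence recalled in Section~\ref{section:basic_notions}, yield \emph{some} simple Morse function with $\beta_1=g$; but that construction gives no control on $k_1$. On the other hand, rearranging the critical points of $f_0$ via Milnor's theorem requires that, in the Heegaard surface, the attaching circle of a chosen index-$2$ handle miss the belt circle of a chosen index-$1$ handle. These circles are the curves $\alpha_i,\beta_i,\gamma$ and $H,A_i,B_i$ of Figure~\ref{figure:heegaard_splitting_of_M_e}, not the tori $\N$ or anything built from your base curves $\gamma_i$. Your sentence ``this disjointness reduces to the geometric disjointness of the base curves $\gamma_i$'' is exactly the missing step, and no argument is offered for it; as written, the tori $\N$ and the handle decomposition of $f_0$ live in the same manifold but are otherwise unrelated.

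The paper closes this gap by direct inspection of the diagram: each $\alpha_j$ (representing $[a_j,h]$) meets only $A_j^\pm$ and $H^\pm$, hence is disjoint from every $B_i^\pm$. One may therefore attach the $2$-handles along $\alpha_1,\ldots,\alpha_g$ immediately after the $1$-handles $H,A_1,\ldots,A_g$ and \emph{before} the $1$-handles $B_1,\ldots,B_g$. The paper then checks that each $\alpha_j$ separates the intermediate boundary surface, so the corresponding $2$-handles and the subsequent $B_i$ $1$-handles yield the $2g$ degree-$3$ vertices, while the remaining $g+1$ index-$1$ and $g+1$ index-$2$ critical points give the $2g+2$ degree-$2$ vertices. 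Once this concrete diagrammatic verification is in place, the hypersurface $\N$ plays no role.
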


\begin{proof}
	Figure \ref{figure:heegaard_splitting_of_M_e} presents a Heegaard diagram of $M_e$. It is a sphere with $2g+1$ handles formed by identifying circles: $H^+$ with $H^-$, $A_i^+$ with $A_i^-$ and $B^+_i$ with $B^-_i$ for $i=1,\ldots,g$, preserving the orientations indicated by arrows in the figure. The circles $H$, $A_i$, $B_i$ obtained after gluing constitute a system of curves defining a handlebody $\mathcal{H}$, and the curves $\alpha_i$, $\beta_i$, $i=1,\ldots,g$ and $\gamma$ define the second handlebody in the considered Heegaard splitting. Let $M$ be the resulting closed $3$-manifold and $\Sigma \cong \Sigma_{2g+1}$ be a Heegaard surface of this splitting. Choose generators $a_1,b_1,\ldots,a_g,b_g,h$ of $\pi_1(\H) = F_{2g+1}$ associated with this diagram, i.e.~a~loop in $\Sigma = \partial \mathcal{H}$ representing $h$ intersects $H$ once and is disjoint from $A_i, B_i$, $i=1,\ldots,g$,
	and similarly for $i=1,\ldots,g$ a loop in $\Sigma$ representing $a_i$ ($b_i$~respectively) intersects only $A_i$ (resp. $B_i$) and only once. Using these generators, the curve $\alpha_i$ represents the word $[a_i,h]$, $\beta_i$ represents $[b_i,h]$, and $\gamma$ represents $h^{-e}\cdot \prod_{i=1}^g [a_i,b_i]$. Note that for $e>0$ we need to change $\gamma$ in the obvious way, hitting $H^-$ instead of $H^+$ as in the figure. Therefore $\pi_1(M)$ has the same presentation as $\pi_1(M_e)$ written above, and so by the Waldhausen Theorem \cite[Corollary 6.5]{Waldhausen} $M$ is diffeomorphic to $M_e$.
	
	\begin{figure}[h]
		\centering

		\begin{tikzpicture}[scale=1]
			\node[rotate=180] at (0,0) {\includegraphics[width=125mm, height=100mm]{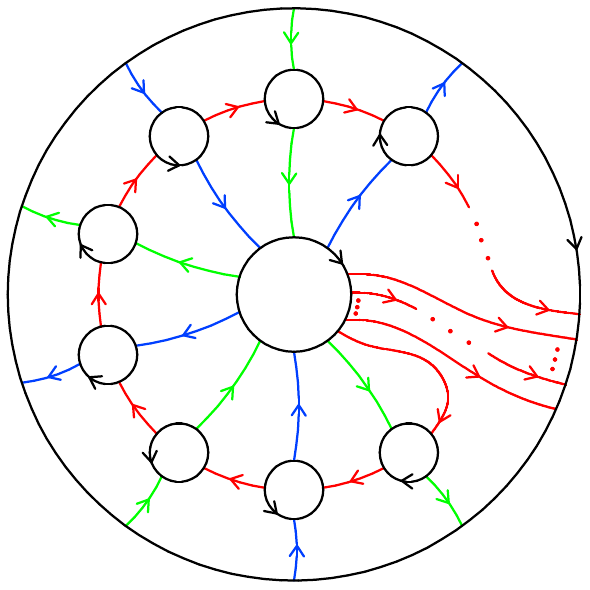}};

			\def\rrr{3.3}
			\def\RRR{4.125}
			
			\draw (-4,2) node {\Large \textcolor{red}{\textbf{$\gamma$}}};

			\draw ({1.3*\RRR*cos(120)},{1.3*\rrr*sin(120)}) node {\Large \textcolor{green}{\textbf{$\alpha_1$}}};
			\draw ({1.3*\RRR*cos(48)},{1.3*\rrr*sin(48)}) node {\Large \textcolor{green}{\textbf{$\alpha_1$}}};

			\draw ({1.3*\RRR*cos(-24)},{1.3*\rrr*sin(-24)}) node {\Large \textcolor{green}{\textbf{$\alpha_2$}}};
			\draw ({1.3*\RRR*cos(-96)},{1.3*\rrr*sin(-96)}) node {\Large \textcolor{green}{\textbf{$\alpha_2$}}};

			\draw ({1.3*\RRR*cos(84)},{1.3*\rrr*sin(84)}) node {\Large \textcolor{blue}{\textbf{$\beta_1$}}};
			\draw ({1.3*\RRR*cos(12)},{1.3*\rrr*sin(12)}) node {\Large \textcolor{blue}{\textbf{$\beta_1$}}};
			
			\draw ({1.3*\RRR*cos(-60)},{1.3*\rrr*sin(-60)}) node {\Large \textcolor{blue}{\textbf{$\beta_2$}}};
			\draw ({1.3*\RRR*cos(-132)},{1.3*\rrr*sin(-132)}) node {\Large \textcolor{blue}{\textbf{$\beta_2$}}};

			\draw (-5,-3.6) node {\Large $H^+$};
			
			\draw (0,0) node {\Large $H^-$};
			
			\draw (0,\rrr) node {\Large $B_1^-$};
			
			%		\draw ({4.3*cos(132)},{4.3*sin(132)}) node {\Large $A_1^+$};
			
			\draw ({\RRR*cos(126)},{\rrr*sin(126)}) node {\Large $A_1^+$};
			\draw ({\RRR*cos(54)},{\rrr*sin(54)}) node {\Large $A_1^-$};
			\draw ({\RRR*cos(18)},{\rrr*sin(18)}) node {\Large $B_1^+$};
			\draw ({\RRR*cos(-18)},{\rrr*sin(-18)}) node {\Large $A_2^+$};
			\draw ({\RRR*cos(-54)},{\rrr*sin(-54)}) node {\Large $B_2^-$};
			\draw ({\RRR*cos(-90)},{\rrr*sin(-90)}) node {\Large $A_2^-$};
			\draw ({\RRR*cos(-126)},{\rrr*sin(-126)}) node {\Large $B_2^+$};

			%%%%%%%%%%%%%%%%%%%%%%%%%%%%%%

			\draw ({1.06*\RRR*cos(136)},{1.06*\rrr*sin(136)}) node {\small $1$};
			\draw ({1.06*\RRR*cos(115.5)},{1.06*\rrr*sin(115.5)}) node {\small $3$};

			\draw ({1.06*\RRR*cos(100)},{1.06*\rrr*sin(100)}) node {\small $4$};
			\draw ({1.06*\RRR*cos(80)},{1.06*\rrr*sin(80)}) node {\small $2$};

			\draw ({1.06*\RRR*cos(64)},{1.06*\rrr*sin(64)}) node {\small $3$};
			\draw ({1.06*\RRR*cos(44)},{1.06*\rrr*sin(44)}) node {\small $1$};
			
			\draw ({1.06*\RRR*cos(28.5)},{1.06*\rrr*sin(28.5)}) node {\small $2$};
			\draw ({1.06*\RRR*cos(7.5)},{1.06*\rrr*sin(7.5)}) node {\small $4$};

			\draw ({1.06*\RRR*cos(-7.5)},{1.06*\rrr*sin(-7.5)}) node {\small $5$};
			\draw ({1.06*\RRR*cos(-28.7)},{1.06*\rrr*sin(-28.7)}) node {\small $7$};

			\draw ({1.06*\RRR*cos(-43.6)},{1.06*\rrr*sin(-43.6)}) node {\small $8$};
			\draw ({1.06*\RRR*cos(-64.2)},{1.06*\rrr*sin(-64.2)}) node {\small $6$};

			\draw ({1.06*\RRR*cos(-80)},{1.06*\rrr*sin(-80)}) node {\small $7$};
			\draw ({1.06*\RRR*cos(-100)},{1.06*\rrr*sin(-100)}) node {\small $5$};

			\draw ({1.06*\RRR*cos(-115.2)},{1.06*\rrr*sin(-115.2)}) node {\small $6$};
			\draw ({1.06*\RRR*cos(-136)},{1.06*\rrr*sin(-136)}) node {\small $8$};

		\end{tikzpicture}

		\caption{Genus $2g+1$ Heegaard splitting of $\es^1$-bundle $M_e$ over $\Sigma_g$ for $e \leq 0$. The same numbers at the points where $\gamma$ intersects the circles $A^{\pm}_i$ and $B^{\pm}_i$ indicate the identified points.
		}\label{figure:heegaard_splitting_of_M_e}
	\end{figure}
	
	Now, let us change the order of handles in the handle decomposition of $M_e$ given by this Heegaard splitting.  First, take a $0$-handle and $1$-handles corresponding to $H$ and $A_1,\ldots,A_g$. The obtained handlebody of genus $g+1$ with boundary $\Sigma'$ can be considered as in Figure \ref{figure:heegaard_splitting_of_M_e} after ignoring circles $B_i^+, B_i^-$. The curves $\alpha_i$ does not intersect $B_i$, so they lie in $\Sigma'$ and we attach $2$-handles to $\Sigma'$ corresponding to them resulting in a manifold~$W$. Note that each $\alpha_i$ splits the surface $\Sigma'$ into two components and so all the attached $2$-handles split $\Sigma'$ into $g+1$ components forming $\partial W$. Then, to these components we attached $g$ $1$-handles corresponding to $B_i$ in the figure obtaining a manifold with connected boundary $\Sigma_{g+1}$. Finally, we attach $g+1$ remaining $2$-handles corresponding to $\gamma$ and $\beta_i$, and one $3$-handle.
	
	Let $f\colon M_e \to \R$ be a Morse function corresponding to this handle decomposition. It can be simple by attaching the handles step by step. It is easy to see that $\beta_1(\reeb(f)) = g$. For example, $\reeb(f)$ has $2g$ vertices of degree $3$ which correspond to $g$ $2$-handles corresponding to $\alpha_i$ and $g$ $1$-handles corresponding to $B_i$ since these handles split or merge components of a surface they are attached. Moreover, $0$-handle and $3$-handle provide vertices of degree $1$ and the rest of $2g+2$ handles correspond to vertices of degree $2$. Thus $\Delta_3(\reeb(f)) = 2g$, $\Delta_2(\reeb(f))=2g+2$ and so $\beta_1(\reeb(f)) = -(1+1)/2+2g/2+1 = g$ by (\ref{formula:cycle_rank_in_terms_of_Delta_3_Lemma_DCG}). 
	
	For $e \neq \pm 1$ since $\Delta_2(\reeb(f)) = 2g+2 = 2(g(M_e) - \corank(\pi_1(M_e)))$, this is the minimal value equal to~$\Delta_2(M_e)$.
\end{proof}

It remains to investigate the case $e = \pm1$, for which $g(M_{\pm 1}) = 2g = \rank(\pi_1(M_{\pm 1}))$ by \cite{Zieschang}. Thus $2g+2 \geq \Delta_2(M_{\pm 1}) \geq 2g$ by the above proposition and bound~(\ref{formula:Delta_2>= 2(g(M)-corank)}).

Recall that a word $x^{\varepsilon_1}_{i_1}\ldots x^{\varepsilon_k}_{i_k}$, $\varepsilon_j = \pm 1$, in a free group $F_n=\geng{x_1,\ldots,x_n \,|\,}$ is \emph{cyclically reduced} if it is reduced and $x^{\varepsilon_k}_{i_k}x^{\varepsilon_1}_{i_1} \neq 1$. Note that if $r$ is a reduced word, then there exists an element $w \in F_n$ such that $wrw^{-1}$ is cyclically reduced and has the same normal closure as $r$. We will need the following result concerning with one-relator groups proved by W. Magnus \cite{Magnus1930} (cf. \cite{Magnus-book}). 

\begin{theorem}[\emph{Freiheitssatz}]
	If $r$ is a cyclically reduced word in a free group $F_n$ that contains $x_i$, then every non-trivial element of the normal closure of $r$ in $F_n$ also contains $x_i$.
\end{theorem}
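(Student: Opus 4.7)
The plan is to prove this by induction on the word length of $r$, reducing first to the generators that actually appear in $r$ (without loss of generality called $x_1,\ldots,x_n$; generators not appearing in $r$ contribute free factors to the quotient and can be handled by normal form in free products). The base case $n=1$: then $r=x_1^{m}$ with $m\neq 0$, the quotient of the ambient free group by $\langle\langle r\rangle\rangle$ decomposes as a free product $F_{N-1}*(\Z/m\Z)$ where $F_{N-1}$ is generated by the other free generators, and the normal-form theorem for free products immediately shows that any word not containing $x_1$ maps injectively into the $F_{N-1}$ factor and therefore cannot be a non-trivial element of the normal closure of $r$.

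For the inductive step, assume $r$ involves at least two distinct generators, and let $\sigma_k(r)$ denote the exponent sum of $x_k$ in $r$. First subcase: some $\sigma_j(r)=0$ with $j\neq i$. Applying the Reidemeister--Schreier rewriting with transversal $\{x_j^{m}\}_{m\in\Z}$ for the kernel of the homomorphism $F_n\to\Z$ sending $x_j\mapsto 1$ and $x_k\mapsto 0$ for $k\neq j$, I would exhibit $G=F_n/\langle\langle r\rangle\rangle$ as an HNN extension with stable letter $x_j$, whose base group $B$ is a one-relator group on the conjugate generators $x_{k,m}=x_j^{-m}x_kx_j^{m}$ ($k\neq j$, $m$ in a finite interval determined by $r$), with single defining relator $r'$ a rewriting of $r$ in these generators. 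The new relator $r'$ is strictly shorter in an appropriate length measure yet still contains $x_{i,0}$, so the inductive hypothesis applies to $B$; Britton's Lemma for HNN extensions then transfers the Freiheitssatz conclusion back to $G$. Second subcase: every $\sigma_j(r)\neq 0$. Here I would apply Magnus's rebasing trick: replace the generators $x_j$ for $j\neq i$ by $y_j=x_j x_i^{c_j}$ for integers $c_j$ chosen so that the rewritten relator has exponent sum zero in some generator other than $x_i$, reducing to the first subcase.

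The hard part will be the bookkeeping in the HNN reduction of Subcase~1. One must verify that the finitely many conjugates of $r$ produced by Reidemeister--Schreier collapse under conjugation to a single defining relator in the $x_{k,m}$-alphabet, and that this relator is genuinely shorter in the measure driving the induction; phrasing the induction hypothesis uniformly, so that it applies simultaneously to every cyclically reduced relator and every generator occurring in it, is essential for the Britton's Lemma step to conclude. The rebasing in Subcase~2 is the other delicate point: the integers $c_j$ must be chosen so that the rewritten relator remains cyclically reduced and the length actually decreases, which in Magnus's original argument sometimes requires first passing to an auxiliary enlargement of the ambient free group before pulling the conclusion back.
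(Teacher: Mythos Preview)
The paper does not prove the \emph{Freiheitssatz}; it is quoted as a classical result of Magnus with references to \cite{Magnus1930} and \cite{Magnus-book} and then used as a black box in the proof of Theorem~\ref{theorem:main_theorem}. There is therefore no proof in the paper to compare against. Your outline follows the standard Magnus--Moldavanskii argument via HNN extensions and induction on relator length, and the overall architecture is sound.

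That said, there is a concrete error in your Subcase~2. The automorphism sending $x_j \mapsto y_j = x_j x_i^{c_j}$ for $j \neq i$ (and fixing $x_i$) leaves every exponent sum $\sigma_j$ with $j \neq i$ unchanged: each occurrence of $x_j^{\pm 1}$ becomes $(x_j x_i^{c_j})^{\pm 1}$, still contributing $\pm 1$ to the $x_j$-count. Only $\sigma_i$ is affected. Hence this substitution cannot produce exponent sum zero in any generator other than $x_i$, and the claimed reduction to Subcase~1 fails as written. The correct manoeuvre is either (for $n \geq 3$) to use automorphisms mixing the $x_j$'s with $j \neq i$ among themselves, realising the $GL_{n-1}(\Z)$-action on the vector $(\sigma_j)_{j\neq i}$ and zeroing out all but one entry; or (for $n = 2$, where this is unavailable) Magnus's genuine enlargement trick, adjoining a root $y$ with, say, $x_j = y^{\sigma_i(r)}$ and then applying a further automorphism in the enlarged free group. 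You allude to the enlargement in your final paragraph, but only as a device for controlling length rather than as the mechanism that actually produces the needed zero exponent sum.

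A smaller point: in Subcase~1 you take the stable letter to be some $x_j$ with $j \neq i$, whereas the textbook route arranges $\sigma_i(r)=0$ and takes $x_i$ itself as the stable letter. With that choice, any word $w$ not involving $x_i$ lies entirely in the base group, and the inductive Freiheitssatz for the shorter base relator immediately gives $w \neq 1$; no nontrivial Britton reduction is needed. With your choice the word $w$ may contain the stable letter $x_j$, so the Britton argument must actually be carried out and requires, in addition, inductive control of intersections of Magnus subgroups in the base. This can be made to work, but it is the harder road.
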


%
%\begin{theorem}\label{theorem:circle_bundle_over_surface}
%Let $M_{\pm 1}$ be the $\es^1$-bundle over an orientable surface $\Sigma_g$ of genus $g\geq 1$, with Euler number $e= \pm1$. Then
%$$
%\Omega_{2g}(\pi_1(M_{\pm 1})) = 2g.
%$$
%Thus any simple Morse function $f\colon M_{\pm 1} \to \R$ with $k_1 = g(M_{\pm 1})=2g$ has no cycles in its Reeb graph, i.e. $\beta_1(\reeb(f)) = 0$, and $\Delta_2(\reeb(f)) = 4g$.
%
%However, $\Delta_2(M_{\pm 1}) = 2g+2$ is realized by a simple Morse function $f'$ with $k_1 = g(M)+1$ and $\beta_1(\reeb(f')) = g = \corank(\pi_1(M_{\pm 1}))$.
%\end{theorem}

We are now ready to prove the main theorem of this work.

\begin{proof}[Proof of Theorem \ref{theorem:main_theorem}]
	%We need to show that $\Omega_{2g}(\pi_1(M_{\pm 1})) = 2g$. 
	Consider the following rank $2g$ presentation $\P$ of $G=\pi_1(M_e)$, $e = \pm 1$,
	$$
	G \cong \P = \geng{a_1,b_1,\ldots,a_g,b_g \,|\, [a_i,h^e]=[b_i,h^e]=1},
	$$
	where $h=\Pi_i [a_i,b_i]$. To prove that $\Omega_{2g}(\pi_1(M_e)) = 2g$ it suffices to show that each relator in any rank $2g$ presentation of $G$ of deficiency $0$ is a word in all $2g$ generators in its reduced form.
	
	First, note that any two generating sets of $G$ of cardinality $2g$ are Nielsen equivalent (for a proof see \cite[A.1 Theorem]{Me_splittings}). Thus if
	$$
	G \cong \P' = \geng{x_1,y_1,\ldots,x_g,y_g \,|\, r_1, \ldots, r_{2g}}
	$$
	is another presentation of $G$, then there is an isomorphism $\varphi \colon F_{2g} \to F'_{2g}$, $F_{2g} = \geng{a_1,b_1,\ldots,a_g,b_g \,|\,}$ and $F'_{2g} = \geng{x_1,y_1,\ldots,x_g,y_g \,|\,}$, such that the diagram
	\begin{align*}
		\begin{xy}
			(0,15)*+{F_{2g}}="G2";
			(0,0)*+{F'_{2g}}="G1";
			(25,15)*+{\P}="F1";
			(25,0)*+{\P'}="F2";
			{\ar@{->}_{\pi'} "G1"; "F2"};%
			{\ar@{->}_{\varphi}^{\cong} "G2"; "G1"};%
			{\ar@{->}^{\pi} "G2"; "F1"};%
			{\ar@{->}^{}_{\cong}  "F1"; "F2"};
			(45,7.5)*+{G}="F";
			{\ar@{->}^{\cong} "F1"; "F"};%
			{\ar@{->}_{\cong} "F2"; "F"};%
		\end{xy}
	\end{align*}
	\noindent commutes, where $\pi$ and $\pi'$ are the canonical quotients. Thus $\varphi(\ker \pi) = \ker \pi'$.
	
	Since $h$ is central in $\P$, $\geng{h}$ is normal and the quotient $\P/\geng{h}$ is $\pi_1(\Sigma_g)$. If we denote by $\psi$ the quotient map $\P \to \pi_1(\Sigma_g)$ given by adding the relation $h=1$, then $\ker \pi \subset \ker (\psi\circ \pi) = \geng{h}^{F_{2g}}$, the normal closure of $h$ in $F_{2g}$. Moreover, $\ker(\pi') \subset \varphi(\ker (\psi \circ \pi)) = \geng{\varphi(h)}^{F'_{2g}}$. Obviously, $F'_{2g}/\geng{\varphi(h)}^{F'_{2g}} \cong F_{2g}/\geng{h}^{F_{2g}} \cong  \pi_1(\Sigma_g)$. Note that $h$ is a cyclically reduced word containing each generator of $F_{2g}$. If $\varphi(h)$ does not contain some generator of $F'_{2g}$ in its cyclically reduced form, say $x$, then $\pi_1(\Sigma_g) \cong F'_{2g}/\geng{\varphi(h)}^{F'_{2g}}$ is a non-trivial free product, a contradiction. Thus $\varphi(h)$ contains every generator of $F'_{2g}$.
	
	To sum up, we showed that for any rank $2g$ presentation of $G$ its relators are elements of the normal closure of an element having all $2g$ generators of a free group in its cyclically reduced form. Thus by the \emph{Freiheitssatz} every such relator is a word in all $2g$ generators. Therefore $\Omega_{2g}(\pi_1(M_e))=2g$.
	
	By Theorem \ref{theorem:simplicity_cycles_and_degree_2_vertices} for any simple Morse function $f\colon M_e \to \R$ with the minimum number of critical points, so $k_1 = g(M_e) = 2g$ and $k_0 = k_3 = 1$, we have
	$$
	\beta_1(\reeb(f)) \leq g(M_e) - \Omega_{g(M_e)}(\pi_1(M_e)) = 2g - 2g = 0.
	$$
	Thus $\reeb(f)$ is a tree and $\Delta_2(\reeb(f)) = 4g$.
	
	The construction of the desired function with two more critical points, so $k_1 = g(M_e)+1 = 2g+1$, is presented in Proposition \ref{proposition:Heegaard_splitting_of_M_e}.
\end{proof}

\section{Final remarks}\label{section:final_remarks} 

The behaviour of $\Delta_2(M)$ with respect to the connected sum operation is unclear. However, we predict it is additive for  orientable $3$-manifolds.

\begin{problem}\label{theorem:Delta_2_is_additive_for_connected_sum}
	Is it true that for closed orientable $3$-manifolds $M$ and $N$ we have
	$$\Delta_2(M \# N) = \Delta_2(M) + \Delta_2(N)?$$
\end{problem}

\begin{remark}
	Let $\widetilde{M} = M \setminus \int D^3$. As we know, having two simple Morse functions $f_i \colon M_i \to \R$ and deleting a neighbourhood of a minimum of one function and a maximum of the second function, we produce functions on $\widetilde{M_i}$ which after possible translation can be pasted together to a simple Morse function $f = f_1 \# f_2$ on $M_1 \# M_2$. The constructed function $f$ has a sphere $\es^2$ as a connected component of its level set which separates $M_1 \# M_2$ into two components $\widetilde{M_i}$.
	
	However, it is easy to construct a function on $M_1 \# M_2$ which cannot be split in this way. It suffices to appropriately rearrange critical points of $f_1 \# f_2$.
\end{remark}

By Lemma \ref{lemma:additivity_for_connected_sum_when_equality_in_lower_bounds} the additivity of $\Delta_2(M)$ under the connected sum is related to equality in its lower bounds. Corollary \ref{corollary:at_least_4_degree_2_vertices_if_genus_and_corank_differs_by_1} shows that the inequality (\ref{formula:Delta_2>= 2(g(M)-corank)}) is strict if $M$ is an irreducible $3$-manifold such that $g(M) = \corank(\pi_1(M))+1 \geq 2$. However, for known examples the non-negative number
$$
\frac{1}{2}\Delta_2(M) + \corank(\pi_1(M)) - g(M)
$$
admits values $0$ and $1$. Can it be greater than $1$ in the case of irreducible $3$-manifolds?

Another question concerning Reeb graphs of functions on $3$-manifolds is about their relations with geometries. In particular, how can we find simple Morse functions on a hyperbolic $3$-manifold $M$ whose Reeb graphs have cycle rank equal to $\corank(\pi_1(M))$? Does it depend on some geometrical properties? It may be related to computations of the invariant $\Omega(G)$ for hyperbolic groups.

\address


\begin{thebibliography}{00}
	
	
	
	
	\bibitem{Me_splittings}
	{Bachman, D., R. Derby-Talbot and R. Weidmann}.
	``Nonisotopic Heegaard splittings of Seifert fibered spaces.''
	\textit{Algebr. Geom. Topol.} 6, no. 1 (2006): 351--372.
	
	
	
	\bibitem{Zieschang}
	{ Boileau, M., and H. Zieschang}.
	``Heegaard genus of closed orientable Seifert 3-manifolds.''
	\textit{Invent. Math.} 76 (1984): 455--468.
	
	
	\bibitem{Edelsbrunner}
	{ Cole-McLaughlin, K., H. Edelsbrunner, J. Harer, V. Natarajan and V. Pascucci}.
	``Loops in Reeb graphs of 2-manifolds.''
	\textit{Discrete Comput. Geom.} 32 (2004): 231--244.
	
	
	
	\bibitem{Cornea}
	{Cornea, O.}
	``The  genus  and  the  fundamental  group  of  high dimensional  manifolds.''
	\textit{Stud. Cerc. Mat.} 41, no. 3 (1989): 169--178.
	%Studii şi Cercetări Matematice
	
	
	\bibitem{DeyMemoliWang}
	{ Dey, T. K., F. Memoli and Y. Wang.}
	``Topological Analysis of Nerves, Reeb Spaces, Mappers and Multiscale Mappers.''
	\textit{SoCG 2017: In Proceedings of 33rd International Symposium on
		Computational Geometry}, ACM (2017): 36:1–36:16.
	
	
	
	\bibitem{LS-cat}
	{  Dranishnikov, A. N., M. G. Katz and Y. B. Rudyak}.
	``Small values of the Lusternik–Schnirelmann category for manifolds.''
	\textit{Geom. Topol.} 12, no. 3 (2008): 1711--1727.
	
	
	\bibitem{Epstein}
	{ Epstein, D. B. A.}
	``Finite presentations of groups and $3$-manifolds.''
	\textit{Quart. J. Math. Oxford} 12 (1961): 205--212.
	
	
	
	\bibitem{Gelbukh:DCG}
	%       \bysame,
	{ Gelbukh, I.}
	``Loops in Reeb graphs of $n$-manifolds.'' 
	\textit{Discrete Comput. Geom.} 59, no. 4 (2018): 843--863.
	
	
	
	\bibitem{Haken}
	{ Haken, W.}
	``Some results on surfaces in $3$-manifolds.''
	\textit{Studies in modern topology}, Math. Assoc. Am., Prentice-Hall (1968): 34--98.
	
	
	
	
	\bibitem{Hempel}
	{ Hempel, J.}
	{\em $3$-manifolds},
	Ann. of Math. Studies, no. 86, Princeton Univ. Press,
	Princeton, N.J.; Univ. of Tokyo Press, Tokyo, 1976.
	
	
	\bibitem{Jaco}
	{Jaco, W.}
	``Geometric realizations for free quotients.''
	\textit{J. Aust. Math. Soc.} 14 (1972): 411--418.
	
	
	\bibitem{KMS}
	{ Kaluba, M., W. Marzantowicz and N. Silva}.
	``On representation of the Reeb graph as a sub-complex of manifold.''
	\textit{Topol. Methods Nonlinear Anal.} 45, no. 1 (2015): 287--307.	
	
	
	
	\bibitem{Magnus1930}
	{ Magnus, W.}
	``{\"U}ber diskontinuierliche Gruppen mit einer definierenden Relation (Der Freiheitssatz).''
	\textit{J.~Reine Angew. Math.} 163 (1930): 141--165.
	
	\bibitem{Magnus-book}
	{ Magnus, W., A. Karrass and D. Solitar}.
	{\em  Combinatorial group theory: Presentations of groups in terms of generators and relations},
	Dover Publications, 1976.
	
	
	\bibitem{Makanin_eqs_in_free_groups}
	{ Makanin, G. S.}
	``Equations in a free group.''
	\textit{Math. USSR Izvestiya} 21, no. 3 (1983): 483--546.
	
	\bibitem{Marzantowicz-Michalak}
	{ Marzantowicz, W., and \L{}. P. Michalak}.
	``Relations between Reeb graphs, systems of hypersurfaces and~epimorphisms onto free~groups.''
	preprint (2020): arXiv:2002.02388.
	
	
	\bibitem{Michalak-TMNA}
	{ Michalak, \L{}. P.}
	``Realization of a graph as the Reeb graph of a Morse function on a manifold.''
	\textit{Topol. Methods Nonlinear Anal.} 52, no. 2 (2018): 749--762.
	%   DOI: 10.12775/TMNA.2018.029.
	
	
	\bibitem{Michalak-DCG}
	{ Michalak, \L{}. P. }
	``Combinatorial modifications of Reeb graphs and the realization problem.''
	\textit{Discrete Comput. Geom.} 65 (2021): 1038--1060. 
	
	
	\bibitem{Milnor}
	{  Milnor, J. W.}
	{\em Lectures on the h-cobordism theorem},
	Princeton University Press, 1965.
	
	\bibitem{Rapaport}
	{ Rapaport, E. S.}
	``Finitely presented groups: The deficiency.''
	\textit{J. Algebra} 24 (1973): 531--543.
	
	
	
	\bibitem{Razborov}
	{ Razborov, A. }
	``On systems of equations in a free group.''
	\textit{Math. USSR Izvestiya} 25 (1985): 115--162.
	
	
	
	
	\bibitem{Reeb}
	{ Reeb, G.}
	``Sur les points singuliers d'une forme de Pfaff compl{\`e}tement int{\'e}grable ou d'une fonction num{\'e}rique.''
	\textit{C. R. Acad. Sci. Paris} 222 (1946): 847--849.
	
	
	
	
	\bibitem{Saeki_Reeb_spaces}
	{ Saeki, O. }
	``Reeb spaces of smooth functions on manifolds.''
	\textit{Int. Math. Res. Not.} 2022, no. 11 (2022): 8740--8768.
	%	DOI:  10.1093/imrn/rnaa301.
	
	
	
	\bibitem{Sela}
	{ Sela, Z.}
	``Diophantine geometry over groups I: Makanin--Razborov diagrams.''
	\textit{Publ. Math. Inst. Hautes {\'E}tudes Sci.} 93 (2001): 31--105.
	
	
	\bibitem{SinghMemoliCarlsson}
	{Singh, G., F. Memoli and G. Carlsson.}
	``Topological Methods for the Analysis of High Dimensional Data Sets and 3D Object Recognition.''
	\textit{Sympos. Point Based Graphics} (2007).
	
	
	%	\bibitem{Smale}
	%	{ S. Smale},
	%	{ Generalized Poincar{\'e}'s Conjecture in Dimensions Greater Than Four},
	%	Ann. of Math. 74 (1961): no. 2, 391--406. 
	
	
	\bibitem{Stallings_corank1}
	{ Stallings, J. R.}
	``Problems about free quotients of groups.''
	\textit{Geometric group theory, Proceedings of a special research quarter at the Ohio State University, spring 1992}, de Gruyter (1995): 165--182.
	
	
	\bibitem{Takens}
	{  Takens, F.}
	``The minimal number of critical points of a function on a compact manifold and the Lusternik--Schnirelman category.''
	\textit{Invent. Math.} 6 (1968): 197--244.
	
	\bibitem{Waldhausen}
	{ Waldhausen, F.}
	``On irreducible $3$-manifolds which are sufficiently large.'' 
	\textit{Ann. of Math.} 87 (1968): 56--88.
	
	
	
\end{thebibliography}
\end{document}